\newtheorem{theorem}{Theorem}[section]
\newtheorem{lemma}[theorem]{Lemma}
\newtheorem{proposition}[theorem]{Proposition}
\newtheorem{corollary}[theorem]{Corollary}
\newtheorem{itheorem}{Theorem}
\theoremstyle{plain}
\newtheorem{definition}[theorem]{Definition}
\newcommand{\nc}{\newcommand}
\nc{\cat}{\mathcal{V}}
\newcommand{\arxiv}[1]{\href{http://arxiv.org/abs/#1}{\tt arXiv:\nolinkurl{#1}}}
\newcommand{\id}{\operatorname{id}}
\renewcommand{\dim}{\operatorname{dim}}
\newcommand{\doubletilde}[1]{
  \tilde{{\tilde{#1}}}}
\newcounter{subeqn}
\renewcommand{\thesubeqn}{\theequation\alph{subeqn}}
\newcommand{\subeqn}{%
  \refstepcounter{subeqn}
  \tag{\thesubeqn}
}
\newcommand{\newseq}{%
  \refstepcounter{equation}
}
\newcommand{\Bi}{\mathbf{i}}
\newcommand{\K}{\mathbf{k}}
\nc{\eE}{\EuScript{E}}
\nc{\eF}{\EuScript{F}}
\newcommand{\Z}{\mathbb{Z}}
\newcommand{\R}{\mathbb{R}}
\newcommand{\C}{\mathbb{C}}
\nc{\ep}{\epsilon}
\newcommand{\la}{\leftarrow}
\nc{\Bv}{\mathbf{v}}
  \nc{\Bw}{\mathbf{w}}
\nc{\coho}{\EuScript{G}}
\nc{\sllhat}{\mathfrak{\widehat{sl}}_\ell}
\nc{\slehat}{\mathfrak{\widehat{sl}}_e}
\nc{\glehat}{\mathfrak{\widehat{gl}}_e}
\renewcommand{\la}{\lambda}
\nc{\Tr}{\operatorname{Tr}}
\nc{\tU}{\mathcal{U}}
\newcommand{\al}{\alpha}
\newcommand{\Hom}{\operatorname{Hom}}
\newcommand{\A}{\mathcal A}
\nc{\lift}{\gamma}
\newcommand{\cO}{\mathcal{O}}
\newcommand{\Ext}{\operatorname{Ext}}
\newcommand{\excise}[1]{}
\nc{\wela}{\EuScript{X}}
\nc{\rola}{\EuScript{Y}}
\newcommand{\End}{\operatorname{End}}
\newcommand{\fM}{\mathfrak{M}}
\newcommand{\fg}{\mathfrak{g}}
\newcommand{\mmod}{\operatorname{-mod}}
\newcommand{\dgmod}{\operatorname{-dg-mod}}
\newcommand{\bla}{{\underline{\boldsymbol{\la}}}}
\newcommand{\thetitle}{Centers of KLR algebras and cohomology rings of quiver varieties}
\begin{document}

\renewcommand{\theitheorem}{\Alph{itheorem}}
\usetikzlibrary{decorations.pathreplacing,backgrounds,decorations.markings}
\tikzset{wei/.style={draw=red,double=red!40!white,double distance=1.5pt,thin}}

\noindent {\Large \bf 
\thetitle}
\bigskip\\
{\bf Ben Webster}\footnote{Supported by the NSF under Grant
  DMS-1151473 and the Alfred P. Sloan Foundation}\\
Department of Mathematics,  University of Virginia, Charlottesville, VA
\bigskip\\
{\small
\begin{quote}
\noindent {\em Abstract.} 
Attached to a weight space in an integrable highest weight
representation of a simply-laced Kac-Moody algebra $\mathfrak{g}$, there are two
natural commutative algebras: the
cohomology ring of a quiver variety and the center of
a cyclotomic KLR algebra.  In this note, we describe a natural geometric
map between these algebras in terms of quantum coherent
sheaves on quiver varieties.

The cohomology ring of an algebraic symplectic variety can be
interpreted as the Hochschild cohomology of a quantization of this variety in the sense of
Bezrukavnikov and Kaledin.  On the other hand, cyclotomic KLR algebras
appear as Ext-algebras of certain particular sheaves, and thus its
center receives a canonical map from the Hochschild cohomology of the
category.  We show that this map is an isomorphism in finite type, and injective in general.  We further note that the
Kirwan surjectivity theorem for quivers of finite type is an easy corollary of these results.

The most important property of this map is its compatibility
with actions of the current algebra on both the cohomology of quiver
varieties and on the Hochschild cohomology of any category with a
categorical action of $\mathfrak{g}$.  The structure of these current
algebra actions
allow us to show the desired results.
\end{quote}
} 
\medskip Let $\fg$ be a simply-laced Kac-Moody algebra.  For each pair of
weights $\la,\mu$ such that $\mu\leq \la$, we have a quiver variety
$\fM^\la_\mu$.  Nakajima \cite{Nak98} has shown that the middle degree
cohomology of $\fM^\la_\mu$ is isomorphic to the $\mu$-weight of the
representation with highest weight $\la$, making this variety a
geometric avatar of the weight space.  On the other hand, we also have
a cyclotomic KLR algebra $R^\la_\mu$, which provides a
categorification of this weight space, in the sense that
$K^0(R^\la_\mu)$ is isomorphic to an integral form of this weight
space, with the classes of indecomposable projectives matching the
canonical basis.

Attached to these objects, we have a pair of commutative algebras: the
cohomology ring $H^*(\fM^\la_\mu)$ of the quiver variety, and the
center $Z(R^\la_\mu)$ of the KLR algebra.  It is clear from various
analogies that these algebras should be closely related.  For example,
both have actions of a current algebra, and both are isomorphic to
dual Weyl modules for this current algebra in finite type.  Also, in
type A, explicit calculations show that they are isomorphic as algebras.

Our goal is to show that these algebras are isomorphic for any ADE
type quiver variety and to give a natural, geometric isomorphism
between them.  For other simply-laced types, we show that there is an
injection $H^*(\fM^\la_\mu)\hookrightarrow Z(R^\la_\mu)$.

Constructing this map requires the theory of quantizations of quiver
varieties.  We'll follow the notation
of \cite{Webcatq} throughout.  
In that work, we concentrated our interest in a quantization $\A_\mu$ of the
structure sheaf of $\fM^\la_\mu$ compatible with its symplectic
structure.  These quantizations are indexed by {\bf periods}, and
we'll specialize throughout to the case where the period is integral.  

There is a categorical action of the Lie algebra
$\fg$ on the coherent modules over the algebra  $\A_\mu$ for all the
different weights $\mu$ appearing in the weight decomposition of the
representation with highest weight $\la$ \cite{Webcatq}.
Inside the derived category of $\A_\mu$-modules, there's a natural
subcategory $\mathcal{C}$, which is the
smallest invariant subcategory containing the constant sheaf on
$\fM^\la_\la$.  This can also be geometrically characterized as the
subcategory of objects with compact support.  In \cite[Th. \ref{O-main1}]{Webqui}, we
showed an equivalence of dg-categories $\mathcal{C}\cong
R^\la_\mu\dgmod$, where $R^\la_\mu$ is the cyclotomic KLR algebra
attached to the same Lie algebra and pair of weights.  

We have a natural pullback map on the Hochschild cohomology of
$\A_\mu$-modules to that of $\mathcal{C}$.  Put in more down-to-earth terms, this gives a map
$\phi\colon H^*(\fM^\la_\mu)\to Z(R^\la_\mu)$, realizing the former as the
Hochschild homology of the category of $\A$-modules, as in \cite[\S
5.4]{BLPWgco}, and the latter as the Ext-algebra of a semi-simple
object in $\mathcal{C}$.

The main result of this note is:
\begin{itheorem}[\mbox{Theorem \ref{thm:final}}]\label{main}
  If $\fg$ is type ADE, the map $\phi$ is an algebra
  isomorphism.  If $\fg$ is of general simply-laced type, then $\phi$
  is injective.
\end{itheorem}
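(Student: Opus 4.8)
The plan is to deduce everything from the compatibility of $\phi$ with current algebra actions on both sides, treating $H := \bigoplus_\mu H^*(\fM^\la_\mu)$ and $Z := \bigoplus_\mu Z(R^\la_\mu)$ as single modules over the current algebra $\fg[t] := \fg\otimes\C[t]$, with $\phi = \bigoplus_\mu \phi_\mu$ a map between them. First I would assemble the two actions. On $H$ this is the geometric action of Nakajima \cite{Nak98}, under which the raising operators $E_i$ send $H^*(\fM^\la_\mu)\to H^*(\fM^\la_{\mu+\alpha_i})$ and the lowering operators go the other way, realized by Hecke-type correspondences, with the higher modes $e_i\otimes t^k$ built from Chern classes of the tautological bundles. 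On $Z$ I would use that the Hochschild cohomology of any category carrying a categorical $\fg$-action acquires a $\fg[t]$-action, with the modes produced from the natural transformations (dots and bubbles) of the functors $E_i,F_i$; since $Z(R^\la_\mu)$ is the self-$\Ext$ algebra of the semisimple object in $\mathcal{C}$, it inherits such an action through the equivalence $\mathcal{C}\cong R^\la_\mu\dgmod$ of \cite{Webqui}.

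The key step, and the one I expect to be the main obstacle, is to prove that $\phi$ intertwines these two actions. Because $\phi$ is the pullback on Hochschild cohomology along the inclusion $\mathcal{C}\hookrightarrow D(\A_\mu\mmod)$, and this inclusion is a map of categories with $\fg$-action, the induced map should respect the algebraically defined modes; the delicate point is to match the geometric Nakajima modes on $H^*(\fM^\la_\mu)\cong HH^*(\A_\mu\mmod)$ (via the identification of \cite{BLPWgco}) with the abstract categorical modes, i.e. to check that the two a priori different current algebra structures on the source coincide. I would do this one generator at a time, comparing the correspondence defining $e_i,f_i$ with the unit and counit of the $(E_i,F_i)$-adjunction at the level of Hochschild cohomology, and then bootstrap to the higher modes using the defining relations of $\fg[t]$.

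Granting equivariance, the rest is module theory. At the top weight $\mu=\la$ the variety $\fM^\la_\la$ is a point and $R^\la_\la=\C$, so $H_\la = Z_\la = \C$ and $\phi_\la$ is an isomorphism. For injectivity I would use that $H$ is a dual Weyl module for $\fg[t]$: its graded dual is the cyclic Weyl module generated by the top weight vector, and dualizing shows that every nonzero $\fg[t]$-submodule of $H$ contains the one-dimensional top weight space $H_\la$. Since $\Ker\phi$ is a submodule and $\phi_\la$ is injective, it follows that $\Ker\phi = 0$. This argument uses only the cocyclic (dual Weyl) structure of the \emph{source}, a structural feature of the geometric action that persists in arbitrary simply-laced type, and so it yields injectivity in general.

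Finally, to upgrade injectivity to an isomorphism in type ADE I would invoke that there both $H$ and $Z$ are dual Weyl modules of highest weight $\la$, hence have equal finite dimension in every weight space $\mu$. An injective map between finite-dimensional spaces of equal dimension is bijective, so each $\phi_\mu$ is an isomorphism; in general type the dimension equality fails—$Z$ may be strictly larger—so only injectivity survives. As a byproduct, surjectivity onto the degree-two part recovers the Kirwan surjectivity statement. The one place where I anticipate real work beyond the equivariance is verifying the dual Weyl identifications themselves, but these are available from the structure theory of quiver-variety cohomology \cite{Nak98} and of cyclotomic KLR centers \cite{Webcatq}.
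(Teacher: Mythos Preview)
Your proposal is essentially correct and, for injectivity, mirrors the paper's argument exactly: equivariance of $\phi$ together with cocyclicity of $H=\bigoplus_\mu H^*(\fM^\la_\mu)$ (dual to cyclicity of the Borel--Moore homology, generated by the fundamental class at $\mu=\la$) forces $\Ker\phi=0$ once $\phi_\la$ is nonzero.

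For surjectivity in finite type you diverge slightly from the paper. You argue by a straight dimension count: both $H$ and $Z=\bigoplus_\mu Z(R^\la_\mu)$ are dual Weyl modules of highest weight $\la$, hence abstractly isomorphic, so an injective equivariant map is bijective. The paper instead routes through \emph{algebraic Kirwan surjectivity} (the surjectivity of $\kappa_a\colon Z(R_{\la-\mu})\to Z(R^\la_\mu)$, proved via the dual Weyl identification of $Z$ and Naoi's structure theory) together with a commutative square linking $\kappa_a$, $\kappa_g$, $\rho$, and $\phi$. Both approaches ultimately rest on the same input, namely the identification of $Z$ with the dual Weyl module from \cite{BHLW,SVVcenter} (not \cite{Webcatq} as you cite). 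Your route is shorter; the paper's detour has the payoff of yielding \emph{geometric} Kirwan surjectivity for $\fM^\la_\mu$ as a corollary. Your closing remark that Kirwan surjectivity follows from ``surjectivity onto the degree-two part'' is not right: Kirwan surjectivity is a statement in all degrees, and extracting it from your argument still requires the commutative square the paper constructs.

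One point you underestimate: matching the categorical current algebra action on $H\!H(\A_\mu)\cong H^*(\fM^\la_\mu)$ with Varagnolo's geometric action is not automatic. The paper must compare the Cautis--Lauda duality on $\tU$ with the ``geometric'' duality coming from Poincar\'e duality on Hecke correspondences, and these differ by an explicit sign twist depending on $\mu$; getting a cyclic duality (needed for the center action) requires further $\beta$-twists. Your plan to check ``one generator at a time'' is the right shape, but the actual content is these sign computations, and they occupy the bulk of the paper's technical work.
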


The key fact which allows us to prove this is that $\phi$ is
compatible with an action of the current algebra $\fg[t]$ on the
cohomology rings $ H^*(\fM^\la_\mu)$ was defined by Varagnolo
\cite{Varyang} and on the center $ Z(R^\la_\mu)$ by the author, Beliakova, Habiro
and Lauda \cite{BHLW} and Shan, Vasserot and Varagnolo
\cite{SVVcenter}, independently.  Once we know that $\phi$ commutes
with the current algebra action, the isomorphism of both source and
target with a dual Weyl module shows that $\phi$ is an isomorphism.

\section{Categorifications and current algebras}
\label{sec:categ-curr-algebr}
  First, we will need some general background on categorical actions.  

\subsection{The 2-category \texorpdfstring{$\tU$}{U}}
\label{sec:2-category-tu}
Fix an oriented graph $\Gamma$, which we will assume is
simply-laced.  
 The object of interest for this subsection is a strict 2-category
$\tU$, due to Khovanov and Lauda~\cite{KLIII}. We will give a more compact definition of this category, shown to be
equivalent to that of earlier literature such as \cite{KLIII,CaLa,Webmerged} in a recent paper of
Brundan \cite{Brundandef}. 

In order to define it, we will need to
define a class of diagrams.  Consider the set of diagrams in the
horizontal strip $\R\times [0,1]$ composed of embedded oriented
curves, whose endpoints lie on distinct points of $\R\times\{0\}$ and
$\R\times \{1\}$.  At each point, projection to the $y$-axis must
locally be a diffeomorphism, unless at that point it looks like one of
the diagrams:
\[
\iota=\tikz[baseline,very thick,scale=3]{\draw[->] (.25,.3) to
  [out=-100, in=60]
(.2,.1)
  to[out=-120,in=-60] (-.2,.1) to [out=120,in=-80] 
(-.25,.3);
  \draw[thin,dashed] (.33,.3) -- (-.33,.3); \draw[thin,dashed] (.33,-.1) -- (-.33,-.1);}
\qquad 
\ep=\tikz[baseline,very thick,scale=3]{\draw[->] (.25,-.1) to
  [out=100, in=-60]   
 (.2,.1)
  to[out=120,in=60]
  (-.2,.1)  to [out=-120,in=80] 
  (-.25,-.1);
\draw[thin,dashed] (.33,-.1) -- (-.33,-.1); \draw[thin,dashed] (.33,.3) -- (-.33,.3); }
\qquad 
\psi=
\tikz[baseline=-2pt,very thick,scale=4]{\draw[->] (.2,.2)  to [out=-90,in=90]
(-.2,-.2) ; \draw[<-]
  (.2,-.2) to[out=90,in=-90] 
(-.2,.2); 
\draw[thin,dashed] (.3,-.2) -- (-.3,-.2); \draw[thin,dashed] (.3,.2) -- (-.3,.2);}
\qquad 
 y=\tikz[baseline=-2pt,very thick,scale=2.5]{\draw[->]
  (0,.2) -- (0,-.2) 
  node[midway,circle,fill=black,inner
  sep=2pt]{}; 
\draw[thin,dashed] (.2,-.2) -- (-.2,-.2); \draw[thin,dashed] (.2,.2) -- (-.2,.2);}\]
 
We'll consider
labelings of the components of these diagrams
by elements of $\Gamma$.  The {\bf top} of such a diagram is the sequence
where we read off the label of each of the endpoints on $\R\times \{1\}$ in
order  from left to right, taking them with positive sign if the
curve is oriented upward there, and a negative sign if it is oriented
downward.  The {\bf bottom} is defined similarly with the endpoints on
$\R\times \{0\}$.  The {\bf vertical composition} $ab$ of two diagrams where
the bottom of $a$ matches the top of $b$ is the stacking of $a$ on top
of $b$ and then scaling the $y$-coordinate by $1/2$ to lie again in
$\R\times [0,1]$.  The horizontal composition of two diagrams $a\circ
b$ places $a$ to the {\it right} of $b$ in the plane, and thus has the
effect of concatenating their tops and bottoms in the opposite of the usual order.

 We let
$\la^i=\al_i^\vee(\la)$ for any weight $\la$.
\begin{definition}
  Let $\doubletilde{\tU}$ be the strict 2-category where
 \begin{itemize}
  \item the set of objects is the weight lattice of the Kac-Moody
    algebra $\fg_\Gamma$.  
\item 1-morphisms $\mu\to \nu$ are sequences
  $\Bi=(i_1,\dots, i_m)$ with each $i_j \in \pm
  \Gamma$, which we interpret as a list of simple roots and their
  negatives such that $\mu +\sum_{j=1}^m\al_{i_j}=\nu$.  Composition
  is given by concatenation.
\item 2-morphisms $h\to h'$ between sequences are $\K$-linear combinations
  of diagrams of the type defined above with $h$ as bottom
  and $h'$ as top.    
\end{itemize}
\end{definition}
Since the underlying objects in $\doubletilde{\tU}$ are fixed for
any 2-morphism, we incorporate them into the diagram by labeling each
region of the place with $\mu$ at the far left, $\nu$ at the far
right, and intermediate regions are labeled by the rule \[  \tikz[baseline,very thick]{
\draw[postaction={decorate,decoration={markings,
    mark=at position .5 with {\arrow[scale=1.3]{<}}}}] (0,-.5) -- node[below,at start]{$i$}  (0,.5);
\node at (-1,0) {$\mu$};
\node at (1,.05) {$\mu-\al_i$};.
}.\]
We'll typically use $\eE_i$ to denote the 1-morphism $(i)$ (leaving
the labeling of regions implicit) and $\eF_i$ to denote $(-i)$.

We can define a {\bf degree} function on diagrams.  The degrees are
given on elementary diagrams by \[
  \deg\tikz[baseline,very thick,scale=1.5]{\draw[->] (.2,.3) --
    (-.2,-.1) node[at end,below, scale=.8]{$i$}; \draw[<-] (.2,-.1) --
    (-.2,.3) node[at start,below,scale=.8]{$j$};}
  =
  \begin{cases}
    -2 & i=j\\
    1 & i \leftrightarrow j\\
    0 & i \nleftrightarrow j \\
  \end{cases}
\qquad \deg\tikz[baseline,very
  thick,->,scale=1.5]{\draw (0,.3) -- (0,-.1) node[at
    end,below,scale=.8]{$i$} node[midway,circle,fill=black,inner
    sep=2pt]{};}=2 \]
  \[
  \deg\tikz[baseline,very thick,scale=1.5]{\draw[->] (.2,.1)
    to[out=-120,in=-60] node[at end,above left,scale=.8]{$i$} (-.2,.1)
    ;\node[scale=.8] at (0,.3){$\la$};} =\langle\la,\al_i\rangle-1
  \qquad \deg\tikz[baseline,very
  thick,scale=1.5]{\draw[->] (.2,.1) to[out=120,in=60] node[at
    end,below left,scale=.8]{$i$} (-.2,.1);\node[scale=.8] at
    (0,-.1){$\la$};} =-\langle\la,\al_i\rangle-1.
  \]
For a general diagram, we sum together the degrees of the elementary
diagrams it is constructed from.  This defines a grading on the
2-morphism spaces of $\doubletilde{\tU}$.

Consider the polynomials \[Q_{ij}(u,v)=
(u-v)^{\#\{ j\to i\}}
(v-u) ^{\#\{ i\to j\}} \]
\begin{definition}
Let $\tU$ be the quotient of $\doubletilde{\tU}$ by the following
relations on 2-morphisms:
\begin{itemize}
\item $\ep$ and $\iota$  are the units and counits of an adjunction,
  i.e. critical points can cancel.

\item the endomorphisms of words only using $\eF_i$ (or by duality only $\eE_i$'s) satisfy the relations of the {\bf quiver Hecke algebra} $R$.\newseq
\begin{equation*}\subeqn\label{first-QH}
    \begin{tikzpicture}[scale=.7,baseline]
      \draw[very thick,postaction={decorate,decoration={markings,
    mark=at position .2 with {\arrow[scale=1.3]{<}}}}](-4,0) +(-1,-1) -- +(1,1) node[below,at start]
      {$i$}; \draw[very thick,postaction={decorate,decoration={markings,
    mark=at position .2 with {\arrow[scale=1.3]{<}}}}](-4,0) +(1,-1) -- +(-1,1) node[below,at
      start] {$j$}; \fill (-4.5,.5) circle (3pt);
      \node at (-2,0){=}; \draw[very thick,postaction={decorate,decoration={markings,
    mark=at position .8 with {\arrow[scale=1.3]{<}}}}](0,0) +(-1,-1) -- +(1,1)
      node[below,at start] {$i$}; \draw[very thick,postaction={decorate,decoration={markings,
    mark=at position .8 with {\arrow[scale=1.3]{<}}}}](0,0) +(1,-1) --
      +(-1,1) node[below,at start] {$j$}; \fill (.5,-.5) circle (3pt);
   \end{tikzpicture}
 \qquad 
    \begin{tikzpicture}[scale=.7,baseline]
      \draw[very thick,postaction={decorate,decoration={markings,
    mark=at position .2 with {\arrow[scale=1.3]{<}}}}](-4,0) +(-1,-1) -- +(1,1) node[below,at start]
      {$i$}; \draw[very thick,postaction={decorate,decoration={markings,
    mark=at position .2 with {\arrow[scale=1.3]{<}}}}](-4,0) +(1,-1) -- +(-1,1) node[below,at
      start] {$j$}; \fill (-3.5,.5) circle (3pt);
      \node at (-2,0){=}; \draw[very thick,postaction={decorate,decoration={markings,
    mark=at position .8 with {\arrow[scale=1.3]{<}}}}](0,0) +(-1,-1) -- +(1,1)
      node[below,at start] {$i$}; \draw[very thick,postaction={decorate,decoration={markings,
    mark=at position .8 with {\arrow[scale=1.3]{<}}}}](0,0) +(1,-1) --
      +(-1,1) node[below,at start] {$j$}; \fill (-.5,-.5) circle (3pt);
      \node at (3.5,0){unless $i=j$};
    \end{tikzpicture}
  \end{equation*}
\begin{equation*}\subeqn\label{nilHecke-1}
    \begin{tikzpicture}[scale=.8,baseline]
      \draw[very thick,postaction={decorate,decoration={markings,
    mark=at position .2 with {\arrow[scale=1.3]{<}}}}](-4,0) +(-1,-1) -- +(1,1) node[below,at start]
      {$i$}; \draw[very thick,postaction={decorate,decoration={markings,
    mark=at position .2 with {\arrow[scale=1.3]{<}}}}](-4,0) +(1,-1) -- +(-1,1) node[below,at
      start] {$i$}; \fill (-4.5,.5) circle (3pt);
      \node at (-2,0){$-$}; \draw[very thick,postaction={decorate,decoration={markings,
    mark=at position .8 with {\arrow[scale=1.3]{<}}}}](0,0) +(-1,-1) -- +(1,1)
      node[below,at start] {$i$}; \draw[very thick,postaction={decorate,decoration={markings,
    mark=at position .8 with {\arrow[scale=1.3]{<}}}}](0,0) +(1,-1) --
      +(-1,1) node[below,at start] {$i$}; \fill (.5,-.5) circle (3pt);
      \node at (1.8,0){$=$}; 
    \end{tikzpicture}\,\,
    \begin{tikzpicture}[scale=.8,baseline]
      \draw[very thick,postaction={decorate,decoration={markings,
    mark=at position .8 with {\arrow[scale=1.3]{<}}}}](-4,0) +(-1,-1) -- +(1,1) node[below,at start]
      {$i$}; \draw[very thick,postaction={decorate,decoration={markings,
    mark=at position .8 with {\arrow[scale=1.3]{<}}}}](-4,0) +(1,-1) -- +(-1,1) node[below,at
      start] {$i$}; \fill (-4.5,-.5) circle (3pt);
      \node at (-2,0){$-$}; \draw[very thick,postaction={decorate,decoration={markings,
    mark=at position .2 with {\arrow[scale=1.3]{<}}}}](0,0) +(-1,-1) -- +(1,1)
      node[below,at start] {$i$}; \draw[very thick,postaction={decorate,decoration={markings,
    mark=at position .2 with {\arrow[scale=1.3]{<}}}}](0,0) +(1,-1) --
      +(-1,1) node[below,at start] {$i$}; \fill (.5,.5) circle (3pt);
      \node at (2,0){$=$}; \draw[very thick,postaction={decorate,decoration={markings,
    mark=at position .5 with {\arrow[scale=1.3]{<}}}}](4,0) +(-1,-1) -- +(-1,1)
      node[below,at start] {$i$}; \draw[very thick,postaction={decorate,decoration={markings,
    mark=at position .5 with {\arrow[scale=1.3]{<}}}}](4,0) +(0,-1) --
      +(0,1) node[below,at start] {$i$};
    \end{tikzpicture}
  \end{equation*}
  \begin{equation*}\subeqn\label{black-bigon}
    \begin{tikzpicture}[very thick,scale=.9,baseline]
      \draw[postaction={decorate,decoration={markings,
    mark=at position .5 with {\arrow[scale=1.3]{<}}}}] (-2.8,0) +(0,-1) .. controls (-1.2,0) ..  +(0,1)
      node[below,at start]{$i$}; \draw[postaction={decorate,decoration={markings,
    mark=at position .5 with {\arrow[scale=1.3]{<}}}}] (-1.2,0) +(0,-1) .. controls
      (-2.8,0) ..  +(0,1) node[below,at start]{$i$}; \node at (-.5,0)
      {=}; \node at (0.4,0) {$0$};
\node at (1.5,.05) {and};
    \end{tikzpicture}
\hspace{.4cm}
    \begin{tikzpicture}[very thick,scale=.9,baseline]

      \draw[postaction={decorate,decoration={markings,
    mark=at position .5 with {\arrow[scale=1.3]{<}}}}] (-2.8,0) +(0,-1) .. controls (-1.2,0) ..  +(0,1)
      node[below,at start]{$i$}; \draw[postaction={decorate,decoration={markings,
    mark=at position .5 with {\arrow[scale=1.3]{<}}}}] (-1.2,0) +(0,-1) .. controls
      (-2.8,0) ..  +(0,1) node[below,at start]{$j$}; \node at (-.5,0)
      {=}; 
\draw (1.8,0) +(0,-1) -- +(0,1) node[below,at start]{$j$};
      \draw (1,0) +(0,-1) -- +(0,1) node[below,at start]{$i$}; 
\node[inner xsep=10pt,fill=white,draw,inner ysep=8pt] at (1.4,0) {$Q_{ij}(y_1,y_2)$};
    \end{tikzpicture}
  \end{equation*}
 \begin{equation*}\subeqn\label{triple-dumb}
    \begin{tikzpicture}[very thick,scale=.9,baseline]
      \draw[postaction={decorate,decoration={markings,
    mark=at position .2 with {\arrow[scale=1.3]{<}}}}] (-3,0) +(1,-1) -- +(-1,1) node[below,at start]{$k$}; \draw[postaction={decorate,decoration={markings,
    mark=at position .8 with {\arrow[scale=1.3]{<}}}}]
      (-3,0) +(-1,-1) -- +(1,1) node[below,at start]{$i$}; \draw[postaction={decorate,decoration={markings,
    mark=at position .5 with {\arrow[scale=1.3]{<}}}}]
      (-3,0) +(0,-1) .. controls (-4,0) ..  +(0,1) node[below,at
      start]{$j$}; \node at (-1,0) {=}; \draw[postaction={decorate,decoration={markings,
    mark=at position .8 with {\arrow[scale=1.3]{<}}}}] (1,0) +(1,-1) -- +(-1,1)
      node[below,at start]{$k$}; \draw[postaction={decorate,decoration={markings,
    mark=at position .2 with {\arrow[scale=1.3]{<}}}}] (1,0) +(-1,-1) -- +(1,1)
      node[below,at start]{$i$}; \draw[postaction={decorate,decoration={markings,
    mark=at position .5 with {\arrow[scale=1.3]{<}}}}] (1,0) +(0,-1) .. controls
      (2,0) ..  +(0,1) node[below,at start]{$j$}; \node at (5,0)
      {unless $i=k\neq j$};
    \end{tikzpicture}
  \end{equation*}
\begin{equation*}\subeqn\label{triple-smart}
    \begin{tikzpicture}[very thick,scale=.9,baseline]
      \draw[postaction={decorate,decoration={markings,
    mark=at position .2 with {\arrow[scale=1.3]{<}}}}] (-3,0) +(1,-1) -- +(-1,1) node[below,at start]{$i$}; \draw[postaction={decorate,decoration={markings,
    mark=at position .8 with {\arrow[scale=1.3]{<}}}}]
      (-3,0) +(-1,-1) -- +(1,1) node[below,at start]{$i$}; \draw[postaction={decorate,decoration={markings,
    mark=at position .5 with {\arrow[scale=1.3]{<}}}}]
      (-3,0) +(0,-1) .. controls (-4,0) ..  +(0,1) node[below,at
      start]{$j$}; \node at (-1,0) {=}; \draw[postaction={decorate,decoration={markings,
    mark=at position .8 with {\arrow[scale=1.3]{<}}}}] (1,0) +(1,-1) -- +(-1,1)
      node[below,at start]{$i$}; \draw[postaction={decorate,decoration={markings,
    mark=at position .2 with {\arrow[scale=1.3]{<}}}}] (1,0) +(-1,-1) -- +(1,1)
      node[below,at start]{$i$}; \draw[postaction={decorate,decoration={markings,
    mark=at position .5 with {\arrow[scale=1.3]{<}}}}] (1,0) +(0,-1) .. controls
      (2,0) ..  +(0,1) node[below,at start]{$j$}; \node at (2.8,0)
      {$+$};        \draw (6.2,0)
      +(1,-1) -- +(1,1) node[below,at start]{$i$}; \draw (6.2,0)
      +(-1,-1) -- +(-1,1) node[below,at start]{$i$}; \draw (6.2,0)
      +(0,-1) -- +(0,1) node[below,at start]{$j$}; 
\node[inner ysep=8pt,inner xsep=5pt,fill=white,draw,scale=.8] at (6.2,0){$\displaystyle \frac{Q_{ij}(y_3,y_2)-Q_{ij}(y_1,y_2)}{y_3-y_1}$};
    \end{tikzpicture}
  \end{equation*}
\item the composition \[
\tikz[very thick]{
\draw[postaction={decorate,decoration={markings,
    mark=at position .8 with {\arrow[scale=1.3]{>}}}}] (0,0) to[out=90,in=-90] node[below,at start]{$j$} node[above,at end]{$j$} (1,1);\draw[postaction={decorate,decoration={markings,
    mark=at position .8 with {\arrow[scale=1.3]{<}}}},postaction={decorate,decoration={markings,
    mark=at position .2 with {\arrow[scale=1.3]{<}}}}] (-.5,0)
  to[out=90,  in=180] node[below,at start]{$i$} (0,.7) to[out=0, in=180] (1,.3) to[out=0,in=-90] node[above,at end]{$i$} (1.5,1); }
\] possesses an inverse.
\item if $\la^i\geq 0$, then the map   $\sigma_{\la,i}\colon \eE_i\eF_i\to \eF_i \eE_i\oplus
  \id_\la^{\oplus\la^i}$ given by  
\[
\tikz[very thick,scale=1.4]{\node[scale=1.5] at (-1,.5){$\la$};
\draw[postaction={decorate,decoration={markings,
    mark=at position .8 with {\arrow[scale=1.3]{>}}}}] (0,0) to[out=90,in=-90] node[below,at start]{$i$} node[above,at end]{$i$} (1,1);\draw[postaction={decorate,decoration={markings,
    mark=at position .8 with {\arrow[scale=1.3]{<}}}},postaction={decorate,decoration={markings,
    mark=at position .2 with {\arrow[scale=1.3]{<}}}}] (-.5,0)
  to[out=90,  in=180] node[below,at start]{$i$} (0,.7) to[out=0,
  in=180] (1,.3) to[out=0,in=-90] node[above,at end]{$i$} (1.5,1); 
\node at (2,.5){$\oplus$};
\draw[postaction={decorate,decoration={markings,
    mark=at position .8 with {\arrow[scale=1.3]{<}}}},postaction={decorate,decoration={markings,
    mark=at position .3 with {\arrow[scale=1.3]{<}}}}] (2.5,0)
  to[out=90,  in=180] node[below,at start]{$i$} (2.75,.5)
  to[out=0,in=90] node[below,at end]{$i$} (3,0); 
\node at (3.5,.5){$\oplus$};
\draw[postaction={decorate,decoration={markings,
    mark=at position .8 with {\arrow[scale=1.3]{<}}}},postaction={decorate,decoration={markings,
    mark=at position .3 with {\arrow[scale=1.3]{<}}}}] (4,0)
  to[out=90,  in=180] node[below,at start]{$i$} node[at
  end,circle,fill=black,inner sep=2pt]{} (4.25,.5)
  to[out=0,in=90] node[below,at end]{$i$} (4.5,0); 
\node at (5,.5){$\oplus$};
\node at (5.5,.5){$\dots$};
\node at (6,.5){$\oplus$};
\draw[postaction={decorate,decoration={markings,
    mark=at position .8 with {\arrow[scale=1.3]{<}}}},postaction={decorate,decoration={markings,
    mark=at position .3 with {\arrow[scale=1.3]{<}}}}] (6.5,0)
  to[out=90,  in=180] node[below,at start]{$i$} node[at
  end,circle,fill=black,inner sep=2pt]{} (6.75,.5)
  to[out=0,in=90] node[below,at end]{$i$} (7,0);\node at (6.75,.75){$\lambda^i-1$};  }
\] possesses an inverse.
\item if $\la^i\leq 0$, then the map   $\sigma_{\la,i}\colon \eE_i\eF_i\oplus
  \id_\la^{\oplus-\la^i}\to \eF_i \eE_i$ given by  
\[
\tikz[very thick,scale=1.4]{\node[scale=1.5] at (-1,.5){$\la$};
\draw[postaction={decorate,decoration={markings,
    mark=at position .8 with {\arrow[scale=1.3]{>}}}}] (0,0) to[out=90,in=-90] node[below,at start]{$i$} node[above,at end]{$i$} (1,1);\draw[postaction={decorate,decoration={markings,
    mark=at position .8 with {\arrow[scale=1.3]{<}}}},postaction={decorate,decoration={markings,
    mark=at position .2 with {\arrow[scale=1.3]{<}}}}] (-.5,0)
  to[out=90,  in=180] node[below,at start]{$i$} (0,.7) to[out=0,
  in=180] (1,.3) to[out=0,in=-90] node[above,at end]{$i$} (1.5,1); 
\node at (2,.5){$\oplus$};
\draw[postaction={decorate,decoration={markings,
    mark=at position .8 with {\arrow[scale=1.3]{<}}}},postaction={decorate,decoration={markings,
    mark=at position .3 with {\arrow[scale=1.3]{<}}}}] (2.5,1)
  to[out=-90,  in=180] node[above,at start]{$i$} (2.75,.5)
  to[out=0,in=-90] node[above,at end]{$i$} (3,1); 
\node at (3.5,.5){$\oplus$};
\draw[postaction={decorate,decoration={markings,
    mark=at position .8 with {\arrow[scale=1.3]{<}}}},postaction={decorate,decoration={markings,
    mark=at position .3 with {\arrow[scale=1.3]{<}}}}] (4,1)
  to[out=-90,  in=180] node[above,at start]{$i$} node[at
  end,circle,fill=black,inner sep=2pt]{} (4.25,.5)
  to[out=0,in=-90] node[above,at end]{$i$} (4.5,1); 
\node at (5,.5){$\oplus$};
\node at (5.5,.5){$\dots$};
\node at (6,.5){$\oplus$};
\draw[postaction={decorate,decoration={markings,
    mark=at position .8 with {\arrow[scale=1.3]{<}}}},postaction={decorate,decoration={markings,
    mark=at position .3 with {\arrow[scale=1.3]{<}}}}] (6.5,1)
  to[out=-90,  in=180] node[above,at start]{$i$} node[at
  end,circle,fill=black,inner sep=2pt]{} (6.75,.5)
  to[out=0,in=-90] node[above,at end]{$i$} (7,1); \node at (6.75,.25){$-\lambda^i-1$}; }
\] possesses an inverse.
\end{itemize}
\end{definition}

\subsection{Dualities}
\label{sec:pivotal-structures}

In this category, the functors $\eE_i$ and $\eF_i$ are biadjoint up to
shift.  Some care about this biadjunction is needed, since it is not
unique up to isomorphism.  However, since the degree 0 automorphisms
of $\eF_i$ are simply a copy of the scalars, the biadjunction is unique up to
scalar multiplication.  

Choosing such a biadjunction between $\eE_i1_{\la}$ and $\eF_i
1_{\la+\al_i}$ for each $i$ and $\la$ defines a duality functor on
$\tU$ such that 
\[(\eE_i1_{\la})^\star=\eF_i
1_{\la+\al_i} (\langle \la,\al_i\rangle +1)\qquad (\eF_i1_{\la+\al_i})^\star=\eE_i
1_{\la} (-\langle \la,\al_i\rangle -1)\] and $u^\star$ is right adjoint
to $u$. 
In this case, $u= u^{\star\star}$ up to shift for every 1-morphism $u$.

We can define one such duality by defining 
\[\iota'=\tikz[baseline,very thick,scale=3]{\draw[<-]   (.25,.3) to
  [out=-100, in=60] node[at start,above,scale=.8]{$i$} (.2,.1)
  to[out=-120,in=-60] (-.2,.1) to [out=120,in=-80] 
  node[at end,above,scale=.8]{$i$} (-.25,.3);\node[scale=.8] at
  (0,.18){$\la$}; \node[scale=.8] at (0,-.1){$\la+\al_i$};
  \draw[thin,dashed] (.33,.3) -- (-.33,.3);\draw[thin,dashed] (.33,-.2) -- (-.33,-.2);}\qquad\qquad 
\ep'=\tikz[baseline,very thick,scale=3]{\draw[<-] (.25,-.1) to
  [out=100, in=-60]   node[at start,below,scale=.8]{$i$} (.2,.1)
  to[out=120,in=60]
  (-.2,.1)  to [out=-120,in=80] node[at end,below,scale=.8]{$i$} (-.25,-.1);\node[scale=.8] at
  (0,.3){$\la$};\node[scale=.8] at (0,.02){$\la+\al_i$};
  \draw[thin,dashed] (.33,-.1) -- (-.33,-.1); \draw[thin,dashed]
  (.33,.4) -- (-.33,.4);}\] according to the rule of
\cite[(1.14-18)]{Brundandef}.  We call this the {\bf Cautis-Lauda
 duality}, since it is uniquely characterized by the relations in
\cite[\S 2]{CaLa}.  However, this choice of duality is not cyclic; that
is, the double dual of a morphism might not coincide with the original
morphism.  To define an action on  $Z(R^\la_\mu)$, we need a cyclic
duality.  Such a duality is constructed in \cite[\S 9]{BHLW}, and will be
discussed in greater detail in \cite{BHLWcyc}.  For our purposes, it will
be useful to give a self-contained account here.

To start with, we can define other dualities by how they differ from the
Cautis-Lauda duality.
\begin{definition}
  Given a map $\theta\colon \wela\to \K^{\times}$, and a biadjunction
  $(\eE_i,\eF_i,\epsilon,\iota,\epsilon',\iota')$, we let the twist of
  this biadjunction be the biadjunction which leaves  $\epsilon,\iota$
  unchanged and takes $\tilde{\epsilon}'_i\colon \eF_i\eE_i1_\la\to
  1_\la$ to be $\theta(\la) \epsilon'$; consequently, we must take $\tilde{\iota}'_i
\colon 1_\la\to
  \eE_i\eF_i1_\la$ to be $\theta(\la+\al_i)^{-1}\iota_i'$. 
\end{definition}

It will be useful for us to sometimes use partially defined maps,
since for an indecomposable categorical module, there will only be
non-zero categories associated to a single coset of the root lattice $\rola$ in
the weight lattice $\wela$.  

Now, let $\beta\colon \rola\times \rola\to \Z/2\Z$ be a bilinear
map of abelian groups such that we
have \[\beta(\gamma,\delta)+\beta(\delta,\gamma)\equiv \langle
\gamma,\delta\rangle\pmod 2.\]

Such $\beta$ obviously form an affine space over the abelian group of
symmetric $\Z/2\Z$-valued forms on $\rola$.  
Note that we can very easily show that this space is non-empty by picking an
order on roots, and declaring that 
\[\beta(\al_i,\al_j)=
\begin{cases}
  \langle
\al_i,\al_j\rangle & i>j\\
0 & j\leq i.
\end{cases}\]

In \cite{Varyang}, such a function is produced by a pair of maps  $(-)_\pm\colon\rola\to \rola$ such that $\la=\la_++\la_-$ and
$\langle \la_+,\mu\rangle=\langle \la,\mu_-\rangle$, and setting $\beta(\gamma,\delta)=\langle
\gamma_+,\delta\rangle\pmod 2$. 
In \cite{BHLW}, we work by assuming that $\Gamma=\Gamma_0\cup \Gamma_1$ is bipartite, and take \[\beta(\al_i,\al_j)=
\begin{cases}
  \langle \al_i,\al_j\rangle & i\in\Gamma_0\\
  0  & i\in \Gamma_1
\end{cases}\]

Pick a map
$\Pi\colon \wela\to \rola$ compatible with the
$\rola$ action by addition.
Let the $\beta$-Cautis-Lauda duality be that obtained by
twisting the Cautis-Lauda duality by the function
$\mu\mapsto (-1)^{\beta(\Pi(\mu),\al_i)}$.  

\begin{proposition}
For any such $\beta$, the 
$\beta$-Cautis-Lauda
duality is cyclic.
\end{proposition}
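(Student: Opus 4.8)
The plan is to verify cyclicity on the generating 2-morphisms. Since $(-)^{\star\star}$ is a strict 2-functor which is the identity on objects and (under the canonical identifications $u\cong u^{\star\star}$) on 1-morphisms, it fixes every 2-morphism as soon as it fixes the elementary diagrams $\iota,\epsilon,\psi,y$ together with their primed versions, as these generate all 2-morphisms under horizontal and vertical composition. First I would observe that the twist still defines a genuine duality: the reciprocal scalars in $\tilde\epsilon'_i=\theta(\la)\epsilon'$ and $\tilde\iota'_i=\theta(\la+\al_i)^{-1}\iota'$ are chosen precisely so that the adjunction (snake) relations are preserved, so the cups and caps continue to furnish the required biadjunction. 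The content of the proposition is therefore the behaviour of the dot $y$ and the crossing $\psi$ under double-dualization, which I would compare against the signs by which the untwisted Cautis--Lauda duality already fails to be cyclic, as recorded in \cite[\S 2]{CaLa} and \cite{Brundandef}.

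For the dot, taking the double dual amounts to carrying a dot once around a full rotation of a single $i$-labelled strand. The untwisted duality already fixes $y$, so it suffices to check that the twist introduces no net scalar. A full rotation uses the primed cup and cap of the $i$-adjunction with region labels differing by $\al_i$, and $\theta$ enters once through $\tilde\iota'$ and once through $\tilde\epsilon'$ with opposite exponents; since $\Pi(\mu+\al_i)=\Pi(\mu)+\al_i$, the two contributions are reciprocal and cancel. Hence $y^{\star\star}=y$, and the same reasoning leaves all single-strand diagrams untouched.

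The crossing $\psi$ between strands labelled $i$ and $j$ is the essential case. Rotating $\psi$ by a full turn requires bending both the $i$-strand and the $j$-strand past one another, and each bend carries a factor of $\theta$ evaluated at the relevant region. As the $i$-strand passes a region whose label changes by $\al_j$, the compatibility $\Pi(\mu+\al_j)=\Pi(\mu)+\al_j$ together with bilinearity of $\beta$ yields a relative sign $(-1)^{\beta(\al_j,\al_i)}$; symmetrically, carrying the $j$-strand past a region changing by $\al_i$ yields $(-1)^{\beta(\al_i,\al_j)}$. The total twist contribution is therefore $(-1)^{\beta(\al_i,\al_j)+\beta(\al_j,\al_i)}$, which by the defining relation $\beta(\gamma,\delta)+\beta(\delta,\gamma)\equiv\langle\gamma,\delta\rangle\pmod 2$ equals $(-1)^{\langle\al_i,\al_j\rangle}$. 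This is exactly the sign by which the untwisted Cautis--Lauda duality fails to fix $\psi$, so the two signs cancel and $\psi^{\star\star}=\psi$. Together with the dot case this establishes cyclicity on all generators, hence on all of $\tU$.

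The main obstacle I anticipate is the sign bookkeeping in this last step: one must pin down precisely the intrinsic non-cyclicity sign of the untwisted duality from the relations of \cite{CaLa,Brundandef}, and confirm that the twist factors attach to the correct cups and caps at the correct weight labels, so that $\Pi$-compatibility genuinely converts each change of region label into the cross terms $\beta(\al_i,\al_j)$ and $\beta(\al_j,\al_i)$ rather than diagonal terms. Once the attachment of factors and the intrinsic sign are matched, the independence of the argument from the particular choice of $\beta$ and of $\Pi$ is automatic, since only the symmetrized value $\beta(\al_i,\al_j)+\beta(\al_j,\al_i)$ enters.
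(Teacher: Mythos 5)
Your proposal is correct and follows essentially the same route as the paper: reduce cyclicity to the generators, note that $y$ is unaffected by the twist, and show that the four twisted cups and caps in a full rotation of $\psi$ contribute exactly the cross terms whose sum $\beta(\al_i,\al_j)+\beta(\al_j,\al_i)\equiv\langle\al_i,\al_j\rangle\pmod 2$ cancels the intrinsic failure factor $(-1)^{\langle\al_i,\al_j\rangle}$ of the untwisted Cautis--Lauda duality. The sign bookkeeping you flag as the remaining obstacle is precisely what the paper carries out via the explicit exponent computation from \cite[2.5]{CaLa}, where bilinearity of $\beta$ and $\Pi$-compatibility make all diagonal and $\la$-dependent terms cancel, leaving only the symmetrized value.
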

\begin{proof}
  The Cautis-Lauda duality fails to be cyclic because the
  crossing $\psi$ of differently colored strands may not be.  In fact,
  in our conventions, a full counter clockwise rotation of this
  diagram differs from the original by a factor of
  $v_{ij}=t_{ij}/t_{ji}=( -1)^{\langle \al_i,\al_j\rangle}$.  In \cite[2.5]{CaLa},
  the left-hand side of the equation changes by $-1$ raised to
  \begin{align*}
\beta( \la,\al_j)+\beta( \la+\al_j,\al_i)-2\beta( \la,\al_i)
    -\beta( \la+\al_i,\al_j)
&=\beta( \al_i,\al_j)-2\beta (\al_j,\al_i)\\
&=\langle \al_i,\al_j\rangle.
  \end{align*}
  Thus, $\psi$ is cyclic in our geometric biadjoint.  It's clear that
  the cyclicity of $y$ remains unchanged, so the result is proved.
\end{proof}

\begin{proposition}[\mbox{\cite[5.1]{BHLW}} ]\label{prop-cur}
The elements  of $\Tr(\tU)$ defined by:
\[\mathsf{E}_{i,r}1_{\lambda}:=  \left[
\tikz[baseline=-2pt,very thick,scale=2.5]{\draw[<-]
  (0,.3) -- (0,-.3) 
node[at end,below,scale=.8]{$i$} node[at start,above,scale=.8]{$i$}
  node[midway,circle,fill=black,inner
  sep=2pt,label=right:{$r$}]{}; 
\node[scale=.8] at (-.2,0) {$\la$}; 
}
\right],  \quad \quad
\mathsf{F}_{j,s}1_{\lambda}:=  \left[
\tikz[baseline=-2pt,very thick,scale=2.5]{\draw[->]
  (0,.3) -- (0,-.3) 
node[at end,below,scale=.8]{$i$} node[at start,above,scale=.8]{$i$}
  node[midway,circle,fill=black,inner
  sep=2pt,label=right:{$r$}]{}; 
\node[scale=.8] at (-.2,0) {$\la$}; 
}
\right],\quad \quad
  \mathsf{H}_{i,r}1_{\lambda}:= \left[
 p_{i,r}(\lambda)\colon \id_{\lambda}\to \id_{\lambda}
\right],\]
define a homomorphism
\begin{equation} \label{eq_sln-homo}
 \rho \colon \dot{\bf U}(\fg[t]) \longrightarrow \Tr(\tU),
\end{equation}
given by
\begin{equation} \label{homomorp}
x^{+}_{i, r} 1_{\lambda} \mapsto
\mathsf{E}_{i,r} 1_{\lambda}
 ,\quad \quad
x^{-}_{j, s} 1_{\lambda}\mapsto
\mathsf{F}_{j,s} 1_{\lambda}
,\quad \quad
\xi_{i, r} 1_{\lambda}\mapsto
\mathsf{H}_{i,r}1_{\lambda}.
\end{equation}
\end{proposition}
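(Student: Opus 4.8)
The plan is to realize $\Tr(\tU)$ concretely as diagrams on the annulus and to verify the defining relations of $\dot{\bf U}(\fg[t])$ one family at a time. Recall that $\Tr(\tU)=\bigoplus_\la \Tr(\tU)1_\la$, where $\Tr(\tU)1_\la$ is the quotient of the 2-endomorphism spaces of 1-morphisms ending at $\la$ by the span of commutators $fg-gf$; pictorially this is the space of diagrams drawn on the annulus $\R/\Z\times[0,1]$ with the two boundary circles glued, and the product is induced by horizontal composition of 1-morphisms, i.e.\ by nesting annular diagrams. Under this description $\mathsf{E}_{i,r}1_\la$ and $\mathsf{F}_{j,s}1_\la$ are the classes of a single closed strand labelled $i$ (resp.\ $j$), oriented upward (resp.\ downward), carrying $r$ (resp.\ $s$) dots, while $\mathsf{H}_{i,r}1_\la$ is the class of the scalar 2-morphism $p_{i,r}(\la)\cdot\id_\la$.

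First I would treat the relations among the $\mathsf{H}_{i,r}$ and their brackets with $\mathsf{E}_{j,s}$ and $\mathsf{F}_{j,s}$. Since the $\mathsf{H}_{i,r}$ are scalar multiples of identity 2-morphisms they commute with one another, and $[\mathsf{H}_{i,r},\mathsf{E}_{j,s}]$ is computed by sliding a dotted bubble past a single strand. The polynomials $p_{i,r}(\la)$ are chosen precisely so that these slides reproduce the current-algebra coefficients $a_{ij}$; verifying this is a direct, if bookkeeping-heavy, application of the bubble relations obtained from \eqref{black-bigon}--\eqref{triple-smart} together with the infinite Grassmannian relations and the fake-bubble identities.

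The heart of the argument is the mixed commutator $[\mathsf{E}_{i,r},\mathsf{F}_{j,s}]1_\la$. The product $\mathsf{E}_{i,r}\mathsf{F}_{j,s}$ is the annular class of the dotted diagram supported on $\eE_i\eF_j1_\la$, while $\mathsf{F}_{j,s}\mathsf{E}_{i,r}$ is supported on $\eF_j\eE_i1_\la$. For $i\neq j$ the isomorphism $\eE_i\eF_j\cong\eF_j\eE_i$ carries no identity summand, so conjugating by it and using the cyclicity of the trace shows that the two classes agree and the bracket vanishes, matching $\delta_{ij}=0$. For $i=j$, I would conjugate by the isomorphism $\sigma_{\la,i}$ of the last two bullets of the definition of $\tU$, which identifies $\eE_i\eF_i1_\la$ with $\eF_i\eE_i1_\la\oplus\id_\la^{\oplus\la^i}$ (and dually when $\la^i\leq 0$). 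The trace is additive over this decomposition: the $\eF_i\eE_i$ summand contributes exactly $\mathsf{F}_{i,s}\mathsf{E}_{i,r}$ and cancels the subtracted term, while the $\la^i$ copies of $\id_\la$ carry the dots of the original cups and caps and contribute a sum of dotted fake bubbles. Evaluating these via the Grassmannian relations and collecting them into the generating function defining $p_{i,r}$ yields $\mathsf{H}_{i,r+s}1_\la$, which is the desired image of $[x^+_{i,r},x^-_{j,s}]1_\la$.

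The remaining Serre relations hold already at the level of $\tU$ as consequences of \eqref{triple-dumb}--\eqref{triple-smart}, and descend to the trace since the latter is a quotient of the 2-morphism spaces; their $t$-graded refinement follows by tracking dots through the same diagrams. The main obstacle is the $i=j$ computation: one must treat both signs of $\la^i$, evaluate the dotted fake bubbles correctly, and, crucially, ensure that rotating the dotted cups and caps around the annulus when conjugating by $\sigma_{\la,i}$ introduces no stray signs. This last point is exactly what the cyclicity of the $\beta$-Cautis-Lauda duality established in the previous proposition guarantees, which is why cyclicity had to be arranged before the current algebra action could be defined.
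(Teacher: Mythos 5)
Note first that the paper does not prove this proposition at all: it is quoted from \cite[5.1]{BHLW}, so there is no internal argument to measure yours against. Your outline does follow the strategy of that reference (and of the $\mathfrak{sl}_2$ trace computation it builds on): realize $\Tr(\tU)$ as annular diagrams with product induced by horizontal composition, handle the Cartan relations by bubble slides, and reduce the crucial commutator $[\mathsf{E}_{i,r},\mathsf{F}_{i,s}]1_\la$ to the decomposition $\eE_i\eF_i1_\la\cong\eF_i\eE_i1_\la\oplus\id_\la^{\oplus\la^i}$ together with additivity of the trace class over direct summands. That is the right skeleton.

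Three points need repair. First, $p_{i,r}(\la)$ is not a scalar: it is a specific degree-$2r$ polynomial in dotted bubbles, i.e.\ a nontrivial element of $\End(\id_\la)$ whose class in $\Tr(\tU)$ is not a multiple of $[\id_\la]$. Commutativity of the $\mathsf{H}$'s still holds because $\End(\id_\la)$ is commutative, but the entire content of $[\mathsf{E}_{i,r},\mathsf{F}_{i,s}]=\mathsf{H}_{i,r+s}$ is the identification of the $\id_\la$-summand contributions (dotted real and fake bubbles, evaluated through the infinite Grassmannian relations) with exactly this polynomial; that computation, which is essentially the rank-one case of \cite{BHLW} and its precursors, is asserted rather than performed in your sketch, and it is where all the work lies. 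Second, cyclicity of the $\beta$-Cautis--Lauda duality is not what makes this proposition work: the trace relation $fg\sim gf$ already renders annular classes insensitive to how diagrams are rotated, and the proposition concerns only $\Tr(\tU)$. In this paper cyclicity is needed at the \emph{next} stage, to transport the action from the cocenter to the center by wrapping with bubbles; attributing it a role here misplaces the logic. Third, the $t$-graded refinements of the Serre-type relations (those forcing $[x^{\pm}_{i,r},x^{\pm}_{j,s}]$ to depend only on $r+s$) do not follow formally from \eqref{triple-dumb}--\eqref{triple-smart}; they require moving dots across crossings via \eqref{first-QH}--\eqref{nilHecke-1} and absorbing the resulting error terms in the trace. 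None of this is fatal to the strategy, but as written the proof leaves its hardest step as an assertion.
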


As explained in \cite[\S 9]{BHLW}, this map induces an action of the current
algebra on the 
cocenter of any category $\mathcal{C}$ with a categorical
$\fg$-action.  Furthermore, if we choose a cyclic duality, then we also
have an action on the center by wrapping with bubbles, as in the graphical calculus
of \cite{CW}.  More precisely, if we have a 
1-morphism $u\colon \mu \to \nu$, and an endomorphism $a\colon u\to
u$, then we can define the convolution
$a\star - \colon Z(\mathcal{C}_\mu)\to Z(\mathcal{C}_\nu)$ given by
\[a\star z=\ep_{u}\circ (a\otimes z\otimes 1_{u^\star})\circ
\iota_{u^\star}\]

It will sometimes be useful to factor this convolution into the
natural map $\End(u,u)\otimes Z(\mathcal{C}_\mu)\to \End(u,u)$ and
then a trace map $\tau\colon \End(u,u)\to Z(\mathcal{C}_\nu)$ given by
$\tau(a)=a\star 1_\mu$.  In this notation, the action of the current
algebra is given by $x^\pm_{i,r}(z)=y_{\pm i}^r\star z$ where $y_{\pm
  i}$ is the dot endomorphism of $\eE_{\pm i}$.  For simplicity, we
let $\tau^\star(a)=\tau(a^\star)$.  
  
\subsection{Cyclotomic quotients}
\label{sec:cyclotomic-quotients}

One of the actions of this 2-category that interests us is on  the
modules over 
cyclotomic KLR algebras $R^\al_\mu$  and their natural deformations
$\check{R}^\la_\mu$ defined by Khovanov and Lauda \cite{KLI} and
Rouquier \cite{Rou2KM,RouQH}.  These play a
universal role amongst all categorifications of the simple
representations with a given highest weight.  We refer to
\cite[\S\ref{m-sec:cyc}\,\&\,\ref{m-sec:univ-quant}]{Webmerged} for their definition, and we will follow the notation of
that paper.  As mentioned above, the center and cocenter of these
algebras inherit a current algebra action, analyzed extensively in
\cite{SVVcenter}.  We'll say that {\bf algebraic Kirwan surjectivity} holds for
a given quiver if the map $\kappa_a\colon Z(R_{\la-\mu})\to
Z(R^\la_\mu)$ for all $\mu$.   This follows immediately
if the sum $\oplus_\mu Z(R^\la_\mu)$ is generated as a
module over $\fg[t]$ by the identities $1_{\mu}$ in $R^\la_\mu$, since
the image of $\kappa_a$ is closed under the current algebra action.

\begin{proposition}\label{prop:Kirwan}
Algebraic  Kirwan surjectivity holds for finite type ADE.
\end{proposition}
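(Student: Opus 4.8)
The plan is to run everything through the sufficient condition recorded just above the statement, so my first move is to reduce to showing that the graded module $M:=\bigoplus_\mu Z(R^\la_\mu)$ is generated over the current algebra $\fg[t]$ by the idempotents $1_\mu$. Once that is in hand the text's own observation closes the argument: the image of $\kappa_a$ is stable under the $\fg[t]$-action and contains every $1_\mu$, so it must exhaust each $Z(R^\la_\mu)$. In fact I would aim for the stronger assertion that $M$ is \emph{cyclic}, generated by the single class $1_\la\in Z(R^\la_\la)$; this is enough because $1_\la$ is itself one of the idempotents $1_\mu$, and it is the obvious candidate for a highest weight vector.

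Next I would verify that $1_\la$ really is a highest weight vector for $\fg[t]$ in the sense of Weyl modules. Because $R^\la_{\la+\al_i}=0$, every raising generator annihilates it, i.e.\ $\mathsf{E}_{i,r}1_\la=0$ for all $i$ and all $r\geq 0$; the Cartan loop generators act by the scalars of Proposition \ref{prop-cur}, so $\mathsf{H}_{i,r}1_\la=p_{i,r}(\la)1_\la$; and the integrability relation $(\mathsf{F}_{i,0})^{\la^i+1}1_\la=0$ holds because the $\al_i$-string through $\la$ terminates at $\la-\la^i\al_i$. These are exactly the defining relations of the local Weyl module $W(\la)$ of $\fg[t]$ at the character $(p_{i,r}(\la))$, so its universal property furnishes a surjection of $\fg[t]$-modules $W(\la)\twoheadrightarrow U(\fg[t])\cdot 1_\la\subseteq M$.

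It then remains to promote the inclusion $U(\fg[t])\cdot 1_\la\subseteq M$ to an equality, and this is the step where I expect the genuine content -- and the finite type hypothesis -- to live. My plan is to invoke the finite-type computation of the current-algebra action on the center from \cite{SVVcenter} (building on \cite{BHLW}), which identifies $M$ with the Weyl module $W(\la)$ under $1_\la\mapsto w_\la$; this makes the displayed surjection an isomorphism and forces $M=U(\fg[t])\cdot 1_\la$. The hard part is precisely this identification, equivalently that $1_\la$ generates all of $M$ and not merely a proper submodule: every class in positive cohomological degree -- the algebraic avatar of a non-fundamental tautological class -- must be manufactured from $1_\la$ by the dotted loop generators $\mathsf{E}_{i,r},\mathsf{F}_{j,s}$ with $r,s\geq 1$. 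That these suffice, with no further generators intervening, is the representation-theoretic shadow of Kirwan surjectivity itself; it is the finite-type structure of $W(\la)$ that guarantees it, and this is exactly the input that genuinely fails outside finite type.
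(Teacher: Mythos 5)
There is a genuine gap, and it sits exactly where you place the ``genuine content.'' Your reduction is the paper's: it suffices to show $M=\bigoplus_\mu Z(R^\la_\mu)$ is $\fg[t]$-generated by identity elements, since the image of $\kappa_a$ contains them and is stable under the action. Your observations about $1_\la$ are also correct ($\mathsf{E}_{i,r}1_\la=0$ because $R^\la_{\la+\al_i}=0$, etc.). The error is the identification you then invoke: the center $M$ is the \emph{dual} Weyl module, not the Weyl module --- it is the cocenter $\bigoplus_\mu \Tr(R^\la_\mu)$ that \cite{BHLW} and \cite{SVVcenter} identify with $W(\la)$, with $1_\la$ mapping to $w_\la$ there. (The paper says this explicitly: the introduction states both $H^*(\fM^\la_\mu)$ and $Z(R^\la_\mu)$ are dual Weyl modules, and the proof of Theorem \ref{thm:final} deduces that $H^*$ is \emph{cocyclic}, cogenerated by $1_\la$.) In the dual Weyl module $1_\la$ is a cogenerator, not a generator, and $U(\fg[t])\cdot 1_\la\subsetneq M$ in general. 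Concretely, for $\fg=\mathfrak{sl}_2$ and $\la=2$: the local Weyl module $W(2)$ is uniserial of length two with head the three-dimensional simple in degree $0$ and socle the trivial module in degree $1$; hence $M\cong W(2)^*$ is uniserial with unique maximal submodule isomorphic to the three-dimensional simple. The entire weight-$2$ space of $M$ lies inside that maximal submodule, so $1_\la$ generates only the simple socle. Your surjection $W(\la)\twoheadrightarrow U(\fg[t])\cdot 1_\la$ exists, but its target is a proper submodule, and no appeal to finite type can promote it to all of $M$.

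The paper's proof repairs precisely this point by choosing a different identity element. By Naoi's results, the dual Weyl module has a \emph{unique simple quotient}, namely the simple $\fg$-module of highest weight $\la_{\min}$ sitting in degree $0$, where $\la_{\min}$ is the minimal dominant weight with $\la_{\min}\leq\la$; the quotient map kills everything of positive degree. The identity $1_{\la_{\min}}\in Z(R^\la_{\la_{\min}})$ has degree $0$ and nonzero image in this quotient, hence does not lie in the unique maximal submodule, hence generates $M$. So the correct single generator is $1_{\la_{\min}}$, not $1_\la$ --- note that for $\la>\la_{\min}$ the $\la$-weight space of the simple quotient is zero, which is exactly why your candidate dies. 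If you replace your cyclicity claim by this one, and replace ``$M\cong W(\la)$'' by ``$M\cong W(\la)^*$,'' your argument becomes the paper's.
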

\begin{proof}
 This is essentially equivalent to \cite[7.3]{BHLW} or \cite[Th. 1]{SVVcenter}.  By
  \cite{Naoi}, the dual Weyl module has a unique simple quotient which
  is a copy of $W_{\la_{min}}$ where $\la_{min}$ is a dominant weight
  of minimal norm such that $\la_{min}\leq \la$.  Furthermore, this
  copy has degree 0, and the quotient map kills all elements of
  positive degree in $Z(R^{\la}_{\la_{min}})$.  Thus, the identity of
  this algebra has non-zero image in this simple quotient.  This shows
  that it is a generator, since it is not contained in the unique
  maximal submodule.  
\end{proof}

\section{The quantum geometry of quiver varieties}
\label{sec:quant-geom-quiv}

The other categorical representation that will interest us is 
a geometric one arising in \cite{Webcatq}.  This representation
$\mathcal{G}_\la$ depends
on a choice of a highest weight $\la$ used in the definition of the
underlying quiver varieties.   This representation is a direct
quantization/categorification of Nakajima's construction in the sense
that $\eE_i, \eF_i$ are sent to functors of convolutions with sheaves
$\mathscr{E}_i,\mathscr{F}_i$ defined in \cite[(\ref{q-Fdef}--\ref{q-Edef})]{Webcatq}.  The sheaf $ \mathscr{E}_i$
is a quantization of the structure sheaf on the Hecke
correspondence $\mathfrak{P}_i$ used by Nakajima, which is a smooth Lagrangian
subvariety of 
$\fM^\la_{\mu+\al_i}\times\fM^\la_{\mu}$. 

Assume that $i$ is a source of $\Gamma$.  The functors $\mathscr{E}_i,\mathscr{F}_i$ are constructed as
Hamiltonian reductions of the convolution on D-modules along the
correspondence
\[\hat{X}^\la_{\mu-\al_i}\longleftarrow\hat{X}^\la_{\mu;\al_i}\longrightarrow\hat{X}^\la_{\mu}.\]
where $\hat{X}^\la_{\mu}$ is the space of framed quiver representations on a
fixed underlying vector space, modulo $GL(\C^{v_i})$, as defined in \cite[\ref{q-def:hat}]{Webcatq}.  One can view
the points of 
this space as a quiver representation on $\Gamma\setminus\{i\}$ and
choice of subspace $V_i$ of dimension $v_i$ in $V_{out}\cong \sum_{i\to
  j}V_j$.   The space 
$\hat{X}^\la_{\mu;\al_i}$ is the analogous space with a nested pair
$V_i\subset V_i'$ with dimensions $v_i$ and $v_i+1$.  The projections
forgetting either of these spaces is proper and smooth with fibers
given by projective spaces.  Thus, Poincar\'e duality on the fibers
with the complex orientation induces a biadjunction of
$(\mathscr{E}_i,\mathscr{F}_i)$, and so a duality on the
image of $\tU$ under $\mathcal{G}_\la$.  We call this the {\bf
  geometric duality}\footnote{Of course ``Poincar\'e duality'' or
  ``Verdier duality'' would both be very appropriate names, but could
  easily lead to confusion.}. As with the Cautis-Lauda duality, the
$\beta$-geometric duality is twist of the geometric duality by
$\mu\mapsto (-1)^{\beta(\mu-\la,\al_i)}$.   

 Every other weight that appears in this
categorification is of the form $\mu=\la-\sum v_i\al_i$.   We'll consider
the function on this coset that sends $\xi_i(\mu)=\mu^i-v_i$.  Note that
this is the same as the number of times $F_i$ must be applied to bring
$\mu$ to the boundary of the Weyl polytope.  In finite type, this is
the same as $m_i$ where $\mu=w_0\la+\sum_jm_j\al_j$.

\begin{proposition}
  The representation $\mathcal{G}_\la$ sends the Cautis-Lauda duality to the geometric
  duality twisted by the function $\mu\mapsto (-1)^{\xi_i(\mu)}$. 
\end{proposition}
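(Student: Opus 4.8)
The plan is to reduce the statement to the computation of a single scalar per weight and colour, and then to identify that scalar with an orientation sign coming from Poincar\'e duality on a projective-space fibre. A duality on a $2$-category is completely determined by its adjunction data, so I would first compare the two biadjunctions directly. Applying $\mathcal{G}_\la$ to the Cautis--Lauda biadjunction $(\eE_i,\eF_i,\ep,\iota,\ep',\iota')$ produces one biadjunction between the functors $\mathscr{E}_i$ and $\mathscr{F}_i$, while the geometric duality supplies a second, built from Poincar\'e duality along the two proper smooth projections of $\hat X^\la_{\mu;\al_i}$. Since the degree-$0$ endomorphisms of $\mathscr{F}_i1_\la$ are just the scalars---exactly as for $\eF_i$ in $\tU$, because the relevant ambient spaces are connected---any two biadjunctions of the same pair of functors are related by rescaling their adjunction data, subject to the zigzag identities. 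Both dualities make $\mathscr{E}_i$ and $\mathscr{F}_i$ biadjoint, and after matching their left adjunctions $(\ep,\iota)$---possible since these too are unique up to a scalar---they differ by a unique twist $\theta\colon\wela\to\K^\times$. The proposition is thus the assertion that $\theta(\mu)=(-1)^{\xi_i(\mu)}$.

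Next I would isolate $\theta$. A twist leaves $(\ep,\iota)$ unchanged and only rescales $(\ep',\iota')$, so once the left adjunctions are matched the whole discrepancy sits in the counit $\ep'\colon\eF_i\eE_i1_\la\to 1_\la$. The scalar $\theta(\la)$ is then read off from the closed diagram obtained by composing the geometric $\iota'$ with the image of the Cautis--Lauda $\ep'$: this composite is $\theta(\la)\cdot\id_{1_\la}$, and the coordinated rescaling of $\iota'$ prescribed in the definition of the twist is exactly what the zigzag identities force, so a single scalar per $(\mu,i)$ controls everything.

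Then I would carry out the geometric computation. The counit $\ep'$ is realised by integration along the fibre of a forgetful map between the spaces $\hat X^\la_{\mu;\al_i}$, one of which is the projective space $\mathbb{P}(V_{out}/V_i)$; the dimension count of \cite{Webcatq} identifies the parity of the relevant fibre dimension with $\xi_i(\mu)=\mu^i-v_i$. The geometric counit is built with the \emph{complex} orientation of this fibre, whereas the Cautis--Lauda counit is fixed by the defining relations of \cite[\S2]{CaLa}; comparing the two normalisations is precisely comparing the complex fundamental class of the fibre against the sign built into the algebraic convention, and this contributes the factor $(-1)^{\xi_i(\mu)}$. The analogous computation for $\iota'$ along the opposite projection is then forced by biadjunction, giving $\theta(\mu)=(-1)^{\xi_i(\mu)}$.

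The main obstacle is this final sign. One must match the diagrammatic normalisation of \cite{CaLa}---in particular the value it assigns to the relevant oriented bubble---against the complex-oriented Poincar\'e pairing on the fibre, and verify that the discrepancy is exactly $(-1)^{\xi_i(\mu)}$ and not some other sign. This is a careful convention-tracking argument: it rests on the identification of the fibre dimension with $\xi_i(\mu)$ and on how the complex orientation enters the grading shift $\langle\la,\al_i\rangle+1$ appearing in $(\eE_i1_\la)^\star$, and it is the only place where the specific geometry of the Hecke correspondence---as opposed to the formal theory of biadjunctions---is genuinely used.
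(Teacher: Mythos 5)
Your opening reduction is exactly the paper's strategy: both dualities make $\mathscr{E}_i$ and $\mathscr{F}_i$ biadjoint, the degree-zero endomorphisms of these functors are scalars, so the two biadjunctions differ by a twist $\theta$ that is detected by evaluating a single closed diagram per weight and colour in both normalizations. Up to that point you are on track.

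The gap is in the final step, which is where the entire content of the proposition lives. The Cautis--Lauda normalization is pinned down by a \emph{specific dotted bubble}: for $\mu^i\leq 0$ one has $\epsilon'_i(1\otimes y^{-\mu^i+1})\iota_i=1$ on $1_{\mu-\alpha_i}$, so what must be computed geometrically is the convolution $(q_1)_*q_2^*\bigl(z^{-\mu^i+1}\cup (q_2)_*q_1^*1\bigr)$ with $z=c_1(V_i'/V_i)$, on the correspondence between $\hat{X}^\la_{\mu-\alpha_i}$, $\hat{X}^\la_{\mu;\alpha_i}$, and the fibre product $\hat{X}^\la_{\mu;\alpha_i}\times_{\hat{X}^\la_{\mu}}\hat{X}^\la_{\mu;\alpha_i}$. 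Your account of where the sign comes from --- ``comparing the complex fundamental class of the fibre against the sign built into the algebraic convention,'' with the sign governed by the parity of the fibre dimension --- is not what happens and does not produce the right exponent. The fibre of $q_1$ over which one ultimately integrates is $\mathbb{P}^{v_i}$, and the integral of $H^{v_i}$ against the complex orientation is $+1$, contributing no sign at all; meanwhile the fibre $\mathbb{P}(V_{out}/V_i)$ of the other projection has dimension $m_i-1$, so its parity is not $\xi_i(\mu)=m_i$ either. The factor $(-1)^{m_i}$ actually arises from the self-intersection term $q_2^*(q_2)_*q_1^*1$, which is the Euler class of the normal bundle $\Hom(V_i'/V_i,V_{out}/V_i')$ of the diagonal in the fibre product; its expansion $(-z)^{m_i-1}+(-z)^{m_i-2}c_1(V_{out}/V_i')+\cdots$ carries the alternating signs that, paired against the prescribed power $z^{-\mu^i+1}$ of the dot, yield $(-1)^{m_i}$. (In the complementary chamber $\mu^i\geq 0$ the sign instead comes from $V_i'/V_i$ restricting to $\mathcal{O}(-1)$ on the $\mathbb{P}^{m_i}$ fibre, so that $z$ integrates as $-H$.) Without identifying the correct bubble and carrying out this Chern-class computation, the proof does not go through, and the heuristic you substitute for it points at the wrong geometric mechanism.
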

\begin{proof}
  In order to calculate the twist, we need only calculate the value of
  one non-zero diagram using both dualities and compare
  these.  Choose a weight $\mu$; assume for now that $\mu^i\leq 0$,
  and let $m_i=\xi(\mu)=v_i+\mu^i$.
  In this case, in the Cautis-Lauda duality, we have that
  $\epsilon'_i(1\otimes y^{-\mu^i+1})\iota_i=1$ on $1_{\mu-\al_i}$.
  Thus, we must compute this value in the geometric duality and see
  that we obtain $(-1)^{m_i}$.    That is, we must compute the
  convolution $(q_1)_*q_2^*(c_1(V_i'/V_i)^{-\mu^i+1} \cup (q_2)_*q_1^*1)$ using the diagram
  \[\hat{X}^\la_{\mu-\al_i}\overset{q_1}\longleftarrow\hat{X}^\la_{\mu;\al_i}\overset{q_2}\longrightarrow\hat{X}^\la_{\mu;\al_i}\times_{\hat{X}^\la_{\mu}}
  \hat{X}^\la_{\mu;\al_i}.\] 

The normal bundle of $\hat{X}^\la_{\mu;\al_i}$ in $\hat{X}^\la_{\mu;\al_i}\times_{\hat{X}^\la_{\mu}}
  \hat{X}^\la_{\mu;\al_i}$ is given by tangent vectors giving variations of
  $V_i'$ with 
  $V_{out} \supset V_i'\supset V_i$; this space is isomorphic to $\Hom(V_i'/V_i,V_{out}/V_i')$;
  thus, letting $z:=c_1(V_i'/V_i)$, the resulting Euler class is
  \[e(V_i'/V_i,V_{out}/V_i')=(-z)^{m_i-1}+(-z)^{m_i-2}c_1(V_{out}/V_i')+\cdots
  +c_{v_i-1}(V_{out}/V_i').\]  Thus, we have that 
\[z^{-\mu^i+1} \cup (q_2)_*q_1^*1=(-1)^{m_i}(z^{v_i}-z^{v_i-1}c_1(V_{out}/V_i')+\cdots
  +(-1)^{m_i+1} z^{-\mu_i}c_{v_i-1}(V_{out}/V_i')).\]
The fibers of the map $q_1$ are isomorphic to projective spaces
$\mathbb{P}^{v_i}$; let $j$ be the inclusion of such a fiber.
The restriction of $V_{out}/V_i'$ to a fiber is trivial, so
$j^*c(V_{out}/V_i')=1$, while $V_i'/V_i$ pulls back to
$\mathcal{O}(1)$, so $j^*z=H$, the standard hyperplane class.  Thus
\[j^* (z^{-\mu^i} \cup (q_2)_*q_1^*1)=(-1)^{m_i}H^{v_i}.\]  The
integral of this over the fiber is, of course, $(-1)^{m_i}$, so we
have that \[(q_1)_*q_2^*(c_1(V_i'/V_i)^{-\mu^i} \cup
(q_2)_*q_1^*1)=(-1)^{m_i}\] as desired.  

The calculation when $m_i\geq 0$ is similar.  In this case, we use
that $\epsilon'_i(1\otimes y^{\mu^i+1})\iota_i=1$ on $1_{\mu+\al_i}$.   That is, we must compute the
  convolution $(r_1)_*r_2^*(z^{\mu^i+1} \cup (r_2)_*r_1^*1)$ using the diagram
  \[\hat{X}^\la_{\mu+\al_i}\overset{r_1}\longleftarrow\hat{X}^\la_{\mu+\al_i;\al_i}\overset{r_2}\longrightarrow\hat{X}^\la_{\mu+\al_i;\al_i}\times_{\hat{X}^\la_{\mu}}
  \hat{X}^\la_{\mu+\al_i;\al_i}.\]  The normal bundle to $
  \hat{X}^\la_{\mu+\al_i;\al_i}$ in $\hat{X}^\la_{\mu+\al_i;\al_i}\times_{\hat{X}^\la_{\mu}}
  \hat{X}^\la_{\mu+\al_i;\al_i}$ corresponds to variations of
  $V_i\subset V_i'$ and thus is given by $\Hom(V_i,V_i'/V_i)$ so we have that 
\[z^{\mu^i+1} \cup (r_2)_*r_1^*1=z^{m_i}-z^{m_i-1}c_1(V_{out}/V_i')+\cdots
  +(-1)^{m_i-1} z^{v_i+1}c_{\mu_i-1}(V_{out}/V_i')).\]

Now, the fibers of $r_1$ are projective spaces $\mathbb{P}^{m_i}$, and
$V_i'/V_i$ pulls back to the line bundle $\mathcal{O}(-1)$, so $z$
corresponds to $-H$, and the integral of $z^{m_i}$ is $(-1)^{m_i}$, as desired.
\end{proof}

While the Cautis-Lauda and geometric dualities are different, they induce the same double dual map:
\begin{proposition}\label{prop:double-dual}
  The double dual maps of the Cautis-Lauda and geometric
  biadjunctions agree.  In particular, the $\beta$-geometric duality is cyclic.
\end{proposition}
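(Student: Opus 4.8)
The plan is to deduce both assertions from the two preceding propositions by tracking how the double dual map responds to twisting a biadjunction. First I would recall that the double dual map $\delta\colon\id\Rightarrow(-)^{\star\star}$ is computed by the $2\pi$ rotation of a strand, so that its value on any $1$-morphism is a product of local contributions indexed by the turn-backs of the rotation diagram. Replacing a biadjunction by its twist along $\theta$ multiplies each contribution by the appropriate ratio of values of $\theta$: each twisted unit $\tilde\iota'$ carries $\theta(\mu)^{-1}$ and each twisted counit $\tilde\epsilon'$ carries $\theta(\mu)$. The first observation is that in any closed rotation diagram these factors are balanced color by color, so that a twist along a function which is \emph{constant} on each coset of $\rola$ leaves $\delta$ unchanged.

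Next I would invoke the preceding proposition, which realizes the geometric duality as the twist of the Cautis--Lauda duality along $\mu\mapsto(-1)^{\xi_i(\mu)}$, and show that this particular twist does not alter $\delta$. On a single strand the color-$i$ unit and counit occur in the same region, so their $\xi_i$-contributions cancel; the only remaining source of a discrepancy is the crossing correction measuring the failure of cyclicity of $\psi$ between colors $i,j$. Exactly as in the computation for the $\beta$-Cautis--Lauda duality above, this correction is $\Delta_j\xi_i+\Delta_i\xi_j$, where $\Delta_e\xi_c$ is the increment of $\xi_c$ under $\mu\mapsto\mu+\al_e$. Since $\xi_c(\mu)=\mu^c-v_c$ yields the integer identity $\Delta_e\xi_c=\langle\al_e,\al_c\rangle+\delta_{ec}$, the correction equals $2\langle\al_i,\al_j\rangle+2\delta_{ij}$, an even number; hence the two double dual maps agree, proving the first claim.

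For the second claim I would compare the two $\beta$-twisted dualities. The $\beta$-Cautis--Lauda duality is the twist of Cautis--Lauda along $\mu\mapsto(-1)^{\beta(\Pi(\mu),\al_i)}$, while $\beta$-geometric is the twist of geometric along $\mu\mapsto(-1)^{\beta(\mu-\la,\al_i)}$. Because $\Pi$ is compatible with the $\rola$-action and $\mu-\la\in\rola$, one has $\beta(\Pi(\mu),\al_i)=\beta(\Pi(\la),\al_i)+\beta(\mu-\la,\al_i)$, so the two twist functions differ only by the constant $(-1)^{\beta(\Pi(\la),\al_i)}$. Combining the first claim with the balancing observation of the first paragraph, the double dual map of $\beta$-geometric coincides with that of $\beta$-Cautis--Lauda. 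The latter is trivial by the proposition asserting cyclicity of the $\beta$-Cautis--Lauda duality, so the double dual map of $\beta$-geometric is trivial as well, which is exactly the assertion that it is cyclic.

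I expect the main obstacle to be the middle step: cleanly isolating the crossing correction from the (separately twist-invariant) single-strand contributions to $\delta$, and confirming that the $\xi_i$-twist contributes an even total. This runs on the same mechanism as the $\beta$-cyclicity proposition, and the only genuine input is the parity identity $\Delta_e\xi_c=\langle\al_e,\al_c\rangle+\delta_{ec}$, whose Kronecker term is precisely what makes the correction even rather than equal to $\langle\al_i,\al_j\rangle$.
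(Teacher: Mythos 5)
Your argument is correct and follows essentially the same route as the paper: reduce to the crossing $\psi$ via the rotation diagram of \cite[2.5]{CaLa}, compute the parity of the cup/cap contributions of the twist $\mu\mapsto(-1)^{\xi_i(\mu)}$ using $\Delta_e\xi_c=\langle\al_e,\al_c\rangle+\delta_{ec}$, and conclude the correction is even, so the double duals agree and cyclicity transfers from the $\beta$-Cautis--Lauda duality. (Your signed expression $\Delta_j\xi_i+\Delta_i\xi_j$ differs from the paper's $\Delta_i\xi_j-\Delta_j\xi_i=0$ only by an even amount, which is immaterial as an exponent of $-1$; you are in fact more explicit than the paper about why constant twists on a coset of $\rola$ do not affect the double dual, which is the content of the ``in particular'' clause.)
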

\begin{proof}
  For the double dual of $y$, this is clear.  For $\psi$, the
  consider the diagram \cite[2.5]{CaLa}.  The 2 cups and 2 caps of
  this diagram contribute a factor of $-1$ raised to 
\[\xi_j(\mu)+\xi_i(\mu-\al_j)-\xi_i(\mu)-\xi_j(\mu-\al_i)=-v_j-\langle
\al_i,\al_j\rangle -v_i+v_i+\langle
\al_j,\al_i\rangle+v_j=0.\]  This completes the proof.
\end{proof}

Since this representation sends grading shift to shift in the
dg-category of $\A_\mu$-modules, it induces a current algebra action on the
center of this category in the dg-sense: its Hochschild cohomolgy
$H\!H(\A_\mu)$.  

As discussed before and in \cite[\S 5.4]{BLPWgco}, we have a natural map
$H\!H(\A_\mu)\to Z(\Ext^\bullet(M,M))$ for any sheaf of $\A_\mu$-modules
$M$.  By \cite[3.6]{Webcatq}, there is a semi-simple core module
$C_\mu$ such that $\Ext^\bullet(C_\mu,C_\mu)\cong R^\la_\mu$, with
this isomorphism inducing an equivalence of dg-categories between the
categories $\mathcal{C}^\la_\mu$ of core modules and
$R^\la_\mu\dgmod$ compatible with the categorical actions of
$\mathcal{C}^\la_\mu$ and $R^\la_\mu\dgmod$.  
That is:
\begin{corollary}\label{cor:centers-match}
  For any cyclic duality, the map $ H^*(\fM^\la_\mu)\cong H\!H(\A_\mu)\to
  Z(\Ext^\bullet(C_\mu,C_\mu))\cong Z(R^\la_\mu)$ is compatible with
  the current algebra action induced by that duality on both sides.
\end{corollary}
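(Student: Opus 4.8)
The statement to prove is Corollary \ref{cor:centers-match}: that for any cyclic duality, the natural map $H^*(\fM^\la_\mu)\cong H\!H(\A_\mu)\to Z(\Ext^\bullet(C_\mu,C_\mu))\cong Z(R^\la_\mu)$ intertwines the current-algebra actions on both sides. The plan is to assemble this from the three ingredients already established in the excerpt: (i) Proposition \ref{prop-cur}, which realizes the current-algebra generators $\mathsf{E}_{i,r},\mathsf{F}_{j,s},\mathsf{H}_{i,r}$ as trace-classes in $\Tr(\tU)$ built from dotted strands, together with the convolution formula $x^\pm_{i,r}(z)=y^r_{\pm i}\star z$; (ii) the fact that $\mathcal{G}_\la$ sends the Cautis–Lauda duality to the ($\xi$-twisted) geometric duality, so that both the Hochschild side and the KLR side are computing \emph{the same} convolution operation, just realized in two equivalent dg-categories; and (iii) the equivalence $\mathcal{C}^\la_\mu\cong R^\la_\mu\dgmod$ from \cite[3.6]{Webcatq}, which is compatible with the categorical $\fg$-actions.

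\textbf{Main steps.} First I would observe that the current-algebra action on \emph{any} center $Z(\mathcal{C}_\mu)$ is defined purely categorically, via the convolution $a\star z=\ep_u\circ(a\otimes z\otimes 1_{u^\star})\circ\iota_{u^\star}$ attached to a chosen cyclic duality; this is the content of the discussion following Proposition \ref{prop-cur}. Hence the action is \emph{functorial} for any equivalence of categorical $\fg$-modules that is compatible with the duality. Second, I would invoke the equivalence $\mathcal{C}^\la_\mu\cong R^\la_\mu\dgmod$ and note that $\Ext^\bullet(C_\mu,C_\mu)\cong R^\la_\mu$ identifies $Z(\Ext^\bullet(C_\mu,C_\mu))$ with $Z(R^\la_\mu)$ compatibly with the categorical action, so that the convolution action on the geometric center matches the known action on $Z(R^\la_\mu)$ of \cite{BHLW,SVVcenter}. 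Third, for the Hochschild side, I would use that $H^*(\fM^\la_\mu)\cong H\!H(\A_\mu)$ is the Hochschild cohomology of the ambient category $\A_\mu\mmod$, and that the pullback map $H\!H(\A_\mu)\to Z(\Ext^\bullet(C_\mu,C_\mu))$ is precisely the restriction of this categorical center to the invariant subcategory $\mathcal{C}$; since the current-algebra action is defined by the same $\star$-convolution on both the full category and the subcategory, the pullback automatically intertwines them. The key technical point here is Proposition \ref{prop:double-dual}: because the Cautis–Lauda and geometric dualities share the same double-dual map, the $\beta$-geometric duality is cyclic, so the $\star$-operation is well-defined geometrically and agrees (after the equivalence) with the one computing the action on $Z(R^\la_\mu)$.

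\textbf{The main obstacle.} The delicate step is verifying that the pullback $H\!H(\A_\mu)\to Z(\Ext^\bullet(C_\mu,C_\mu))$ genuinely commutes with $\star$-convolution, rather than merely respecting the underlying categorical structure. The subtlety is that the functors $\mathscr{E}_i,\mathscr{F}_i$ act on the full category of $\A_\mu$-modules, while the trace/convolution maps used to generate the current algebra act on the center; one must check that the geometric biadjunction units and counits $\iota_{u^\star},\ep_u$ used to define $\star$ on $H\!H(\A_\mu)$ restrict compatibly to those on $\mathcal{C}$. This is exactly where cyclicity is needed: a non-cyclic duality would make the double-dual discrepancy appear as a scalar that differs between the ambient and the core settings. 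Having fixed a cyclic duality (guaranteed by Proposition \ref{prop:double-dual}), this discrepancy vanishes, and the remaining verification is a diagrammatic identity: both sides compute $(q_1)_*q_2^*$ of a dotted cup-cap configuration, which by the duality-matching proposition give the same element. Thus the corollary follows formally once the duality is cyclic and the equivalence of \cite[3.6]{Webcatq} is invoked, with no further computation beyond identifying the two convolutions.
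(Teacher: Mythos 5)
Your argument is correct and follows essentially the same route as the paper, which states this corollary without a separate proof precisely because it follows formally from the naturality of the restriction map $H\!H(\A_\mu)\to Z(\Ext^\bullet(C_\mu,C_\mu))$ with respect to the $\star$-convolution defined by the chosen cyclic duality, together with the action-compatible equivalence $\mathcal{C}^\la_\mu\cong R^\la_\mu\dgmod$ of \cite[3.6]{Webcatq}. The only superfluous element is your appeal to Proposition \ref{prop:double-dual} and the $(q_1)_*q_2^*$ cup--cap computation, which belong to the comparison with Varagnolo's action (Corollary \ref{cor:cohomology-match}) rather than to this statement, which holds for any cyclic duality by pure functoriality.
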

This is half of our main theorem, since it relates the current algebra
action of $Z(R^\la_\mu)$ to the cohomology of $ H^*(\fM^\la_\mu)$.
However, it is not obvious that the action of $H^*(\fM^\la_\mu)$
induced by
the $\tU$-action agrees with Varagnolo's. We turn to showing this in
the next section.

\section{Hochschild cohomology and Betti cohomology}
\label{sec:hochsch-cohom}

\subsection{Hochschild vs. Betti} 
Now let us discuss the connection of the categorical action
$\mathcal{G}_\la$ to the usual pushforward and pullback operations on
cohomology.  As before, let $\mathfrak{P}_i \subset
\fM^\la_{\mu+\al_i}\times \fM^\la_{\mu}$ be the Hecke
correspondence for $i$; let $U$ be the open complement of this variety.  This subvariety has projection maps
\[\pi_1\colon   \mathfrak{P}_i\to \fM^\la_{\mu+\al_i}\qquad
\pi_2\colon \mathfrak{P}_i\to \fM^\la_{\mu}.\]  For simplicity, let $d_{\mu}=\dim_\C \fM^\la_{\mu}$.  There is the usual
pullback map in cohomology $\pi_i^*$, but we'll also wish to consider
the pushforwards
\[(\pi_1)_*\colon H^*(\mathfrak{P}_i )\cong
H_{d_{\mu}+d_{\mu+\al_i}-*}^{BM}(\mathfrak{P}_i )\to
H_{d_{\mu}+d_{\mu+\al_i}-*}^{BM} (\fM^\la_{\mu+\al_i})\cong
H^{-d_{\mu}+d_{\mu+\al_i}+*}(\fM^\la_{\mu+\al_i}).\]
\[(\pi_2)_*\colon H^*(\mathfrak{P}_i )\cong
H_{d_{\mu}+d_{\mu+\al_i}-*}^{BM}(\mathfrak{P}_i )\to
H_{d_{\mu}+d_{\mu+\al_i}-*}^{BM} (\fM^\la_{\mu})\cong
H^{d_{\mu}-d_{\mu+\al_i}+*}(\fM^\la_{\mu}).\]

By definition, Varagnolo's current algebra operators \cite[(4.1)]{Varyang} depend on a
choice of $\beta$, and are given by:
\[x^+_{i,r}(m)=(-1)^{\beta(\mu-\la,\al_i)}(\pi_1)_*(z^r\cup
\pi_2^*m)\qquad x^-_{i,r}(m)=(-1)^{\beta(\al_i,\mu-\la)}(\pi_2)_*(z^r\cup
\pi_1^*m)\] where as above, $z=c_1(V_i'/V_i)$ is the Chern class of
the tautological line bundle over the Hecke correspondence.  Note that
Varagnolo uses a more restricted set of $\beta$'s, but one can easily check
that changing this function by a symmetric bilinear form only changes the action by an automorphism
of the Yangian.

\begin{lemma}
  We have an isomorphism $\Ext^\bullet(\mathscr{E}_i,
  \mathscr{E}_i)\cong H^*(\mathfrak{P}_i;\C)$ and the natural algebra maps
\[H\!H(\A_{\mu+\al_i})\to  \mathscr{E}xt^\bullet(\mathscr{E}_i,
  \mathscr{E}_i)\leftarrow H\!H(\A_{\mu})\] are intertwined with the
  pullbacks $\pi_1^*,\pi_2^*$.
\end{lemma}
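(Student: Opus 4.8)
The plan is to identify the Ext-algebra of $\mathscr{E}_i$ geometrically and then match the two Hochschild-cohomology maps with the topological pullbacks. First I would use the fact that $\mathscr{E}_i$ is a quantization of the structure sheaf $\cO_{\mathfrak{P}_i}$ of the Hecke correspondence, which is a smooth Lagrangian subvariety of $\fM^\la_{\mu+\al_i}\times\fM^\la_\mu$. Since $\mathfrak{P}_i$ is smooth, the self-Ext of its structure sheaf (or more precisely of the quantized sheaf, working with the associated graded and a degeneration/formality argument) should be computed by Hochschild cohomology of $\mathfrak{P}_i$ itself; and because $\mathscr{E}_i$ is a line-bundle-type object supported on a smooth Lagrangian, the Hochschild cohomology in turn reduces to the Betti cohomology $H^*(\mathfrak{P}_i;\C)$. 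Concretely, I would argue that $\mathscr{E}xt^\bullet(\mathscr{E}_i,\mathscr{E}_i)$ is a sheaf of algebras on $\mathfrak{P}_i$ whose hypercohomology gives $\Ext^\bullet$, and that the quantization does not change this cohomology (the deformation from $\cO_{\mathfrak{P}_i}$ is filtered, with associated graded equal to the classical structure sheaf, and the spectral sequence degenerates for dimension/parity reasons coming from the symplectic structure).

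Next I would address the two maps from Hochschild cohomology. The key structural input is that $\fM^\la_\mu$ and $\fM^\la_{\mu+\al_i}$ are the symplectic reductions that $\mathfrak{P}_i$ maps into, and that $H\!H(\A_\mu)\cong H^*(\fM^\la_\mu)$ via the Bezrukavnikov--Kaledin identification used throughout. The natural algebra map $H\!H(\A_\mu)\to\mathscr{E}xt^\bullet(\mathscr{E}_i,\mathscr{E}_i)$ is the one induced by the bimodule structure of $\mathscr{E}_i$: acting by $\A_\mu$ on the right endows $\mathscr{E}_i$ with a central action of $Z(\A_\mu)$, whose Hochschild cohomology maps in. I would then unwind this at the level of associated graded: the classical shadow of the right action is the pullback $\pi_2^*\colon H^*(\fM^\la_\mu)\to H^*(\mathfrak{P}_i)$ along the projection $\pi_2\colon\mathfrak{P}_i\to\fM^\la_\mu$, and symmetrically the left action gives $\pi_1^*$ along $\pi_1$. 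This is essentially the statement that functoriality of Hochschild cohomology for a bimodule supported on a correspondence, passed through the classical limit, is the ordinary cohomological pullback.

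The main obstacle, and the step I would spend the most care on, is justifying that the quantized Ext-algebra genuinely coincides with the classical Betti cohomology and that the bimodule maps degenerate to $\pi_1^*,\pi_2^*$ on the nose, rather than only up to lower-order filtration terms. Because both source and target are commutative and the objects live in the symplectically-even degrees where odd cohomology vanishes for these quiver varieties, I expect the relevant spectral sequences to degenerate and the filtrations to split, so that the maps are determined by their classical limits. I would invoke the compatibility of the quantization $\A_\mu$ with the filtration by order of the differential operators, as set up in \cite{Webcatq}, together with the identification of $H\!H(\A_\mu)$ with $H^*(\fM^\la_\mu)$ from \cite[\S 5.4]{BLPWgco}. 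Once the two maps are identified with $\pi_1^*$ and $\pi_2^*$ and the target with $H^*(\mathfrak{P}_i;\C)$, the lemma follows.
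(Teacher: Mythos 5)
Your overall picture (quantized Lagrangian, self-Ext equal to Betti cohomology of $\mathfrak{P}_i$, bimodule actions inducing the two pullbacks) matches the paper's, but the mechanism you propose for the key step is wrong, and the correct mechanism is exactly the opposite of what you assert. You claim that the passage from the associated graded to the quantization is a degenerating spectral sequence, so that the quantized Ext agrees with the classical one. But the classical sheaf-Ext of $\cO_{\mathfrak{P}_i}$ in the ambient (commutative) setting is the exterior algebra $\wedge^\bullet N_{\mathfrak{P}_i}\cong \Omega^\bullet_{\mathfrak{P}_i}$ on the conormal bundle of the Lagrangian --- not the constant sheaf --- and its hypercohomology is Hodge cohomology $\bigoplus H^q(\mathfrak{P}_i,\Omega^p)$, which is far too big (and for the non-compact $\mathfrak{P}_i$ need not even agree with Betti cohomology as a vector space). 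The quantization drastically \emph{changes} this answer: the point is that the differentials induced by the noncommutative deformation are nonzero (locally they are the de Rham differential) and collapse the exterior algebra down to $\C$. The paper gets this directly by a local computation: on a chart, $\mathscr{E}_i$ is the vacuum representation of a Weyl algebra, and the Koszul resolution shows its self-Ext is one-dimensional in degree $0$, whence $\mathscr{E}xt^\bullet(\mathscr{E}_i,\mathscr{E}_i)\cong\C_{\mathfrak{P}_i}$ and $\Ext^\bullet(\mathscr{E}_i,\mathscr{E}_i)\cong H^*(\mathfrak{P}_i;\C)$. Your parity/formality argument, as stated, would prove a false intermediate statement.

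The same issue infects your treatment of the two maps. You correctly flag that identifying the bimodule maps with $\pi_1^*,\pi_2^*$ ``on the nose, rather than only up to lower-order filtration terms'' is the delicate point, but your proposed fix (degeneration and splitting of filtrations) again leans on the problematic spectral sequence. Once one knows the local computation, no such analysis is needed: the maps in question are maps of sheaves of algebras $\pi_1^*\C_{\fM^\la_{\mu+\al_i}}\to\C_{\mathfrak{P}_i}$ and $\pi_2^*\C_{\fM^\la_{\mu}}\to\C_{\mathfrak{P}_i}$, and there is only one such unital algebra map, namely the canonical one, which induces the topological pullback on cohomology. I would rework the argument around the local Weyl-algebra computation (following \cite[\S 5.4]{BLPWgco}) rather than around a classical-limit degeneration.
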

\begin{proof}
  The proof closely follows the computation of the Hochschild
  cohomology of $\A$ in \cite[\S
5.4]{BLPWgco}.  Since $\mathfrak{P}_i$ is smooth, computing the sheaf
Ext $\mathscr{E}xt^\bullet(\mathscr{E}_i,
  \mathscr{E}_i)$ reduces to a local computation with the vacuum
  representation of the Weyl algebra.  This has vanishing higher
  self-Exts and 1 dimensional in degree 0, as the usual Koszul
  resolution shows.  Thus, $ \mathscr{E}xt^\bullet(\mathscr{E}_i,
  \mathscr{E}_i)\cong \C_{\mathfrak{P}_i}$, and so $\Ext^\bullet(\mathscr{E}_i,
  \mathscr{E}_i) \cong H^*(\mathfrak{P}_i;\C)$.  
This local computation also shows the match with pullback in cohomology, since they must be given by the
unique map of algebra sheaves between
$\pi_1^*\C_{\fM^\la_{\mu+\al_i}}$ (resp. $\pi_2^*\C_{\fM^\la_{\mu}}$) and $ \C_{\mathfrak{P}_i}$.
\end{proof}

If we fix a duality as before, then we also have 
pushforward maps \[H\!H(\A_{\mu+\al_i})\overset{\tau}\leftarrow\Ext^\bullet(\mathscr{E}_i,
  \mathscr{E}_i)\overset{\tau^\star}\to H\!H(\A_{\mu}),\] as defined
  in Section \ref{sec:pivotal-structures}.

  \begin{lemma}
 Under the geometric duality, the maps $\tau$ (resp. $\tau^\star$) are intertwined with the
    pushforward maps in cohomology $(\pi_1)_*$ (resp. $(\pi_2)_*$).  
  \end{lemma}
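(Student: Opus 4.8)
The plan is to establish the compatibility of the categorical trace map $\tau$ (resp. $\tau^\star$) with geometric pushforward $(\pi_1)_*$ (resp. $(\pi_2)_*$) by exploiting the same local structure already used in the preceding lemma. We have just identified $\Ext^\bullet(\mathscr{E}_i,\mathscr{E}_i)\cong H^*(\mathfrak{P}_i;\C)$ and shown that the algebra maps from $H\!H(\A_{\mu+\al_i})$ and $H\!H(\A_\mu)$ are the pullbacks $\pi_1^*,\pi_2^*$. The trace map $\tau$ is built, as in Section~\ref{sec:pivotal-structures}, from the geometric biadjunction data $(\epsilon,\iota)$ coming from Poincar\'e duality on the fibers of the projections from the nested correspondence. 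Since the geometric duality is by construction the one induced by Poincar\'e duality along the projective-space fibers of $\pi_1$ (resp. $\pi_2$), the categorical cap $\epsilon_{\mathscr{E}_i}$ \emph{is} the fiberwise integration that defines $(\pi_1)_*$.

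First I would recall that for a proper smooth map with fibers $\mathbb{P}^k$, the Gysin pushforward in cohomology is precisely the map $H^*(\mathfrak{P}_i)\cong H^{BM}_{*}(\mathfrak{P}_i)\to H^{BM}_{*}(\fM^\la_{\mu+\al_i})\cong H^{*}(\fM^\la_{\mu+\al_i})$ that appears in the definition of $(\pi_1)_*$ above, with the degree shifts $-d_\mu+d_{\mu+\al_i}$ accounting for the fiber dimension. The key point is that under the equivalence $\Ext^\bullet(\mathscr{E}_i,\mathscr{E}_i)\cong H^*(\mathfrak{P}_i)$, the categorical trace $\tau(a)=\epsilon_{\mathscr{E}_i}\circ(a\otimes 1_{\mathscr{E}_i^\star})\circ\iota_{\mathscr{E}_i^\star}$ unwinds---because the Hochschild cohomology factor $H\!H(\A_\mu)$ maps to $\Ext^\bullet$ via pullback, and the counit/unit of the \emph{geometric} biadjunction are exactly the integration-over-the-fiber and the diagonal class---to the composite $(\pi_1)_*(-\cup\,\text{(Euler class factor)})$. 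I would check that the convolution defining $\star$ localizes, fiber by fiber, to the standard Koszul/vacuum computation of the earlier lemma, so that the trace reduces to integration along the $\mathbb{P}^k$ fibers with no extraneous sign.

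The main obstacle I expect is bookkeeping the normal-bundle and Euler-class contributions so that $\tau$ matches $(\pi_1)_*$ \emph{on the nose}, rather than up to a twist. The preceding proposition already computed that the geometric duality differs from the Cautis--Lauda duality by the function $\mu\mapsto(-1)^{\xi_i(\mu)}$, and the convolution $(q_1)_*q_2^*(\cdots)$ appearing there makes explicit the Euler class $e(V_i'/V_i,V_{out}/V_i')$ that enters. The delicate part is verifying that, with the \emph{geometric} duality (as opposed to the $\beta$-twisted one used for Varagnolo's operators), these Euler-class corrections are exactly what the Gysin formalism produces, so that $\tau=(\pi_1)_*$ with no residual scalar. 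I would handle this by the same fiberwise reduction: restrict to a projective-space fiber via the inclusion $j$, use $j^*z=H$ and the triviality of $V_{out}/V_i'$ on the fiber, and confirm that integrating $H^{v_i}$ over $\mathbb{P}^{v_i}$ yields the normalization demanded by the definition of the pushforward in the displayed isomorphisms above.

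Finally, the statement for $\tau^\star$ and $(\pi_2)_*$ follows by the identical argument applied to the projection $\pi_2$, since $\tau^\star(a)=\tau(a^\star)$ is defined via the dual cap/cup, which geometrically corresponds to Poincar\'e duality along the fibers of $\pi_2$; the roles of $\mathscr{E}_i$ and $\mathscr{F}_i$ (equivalently the two forgetful projections from the nested space) are interchanged, and the same local computation applies verbatim.
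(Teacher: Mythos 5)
Your central premise --- that $\pi_1\colon \mathfrak{P}_i\to \fM^\la_{\mu+\al_i}$ and $\pi_2\colon \mathfrak{P}_i\to\fM^\la_\mu$ are proper smooth maps with projective-space fibers, so that $(\pi_1)_*$ is literally ``integration over a $\mathbb{P}^k$'' and the geometric counit \emph{is} that integration --- is not correct, and this is where the argument breaks. The projective-space fibrations in this story are the forgetful maps $\hat{X}^\la_{\mu;\al_i}\to\hat{X}^\la_{\mu}$ and $\hat{X}^\la_{\mu;\al_i}\to\hat{X}^\la_{\mu+\al_i}$ on the half-dimensional spaces of framed quiver representations; it is there that Poincar\'e duality on the fibers defines the geometric biadjunction. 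The Hecke correspondence $\mathfrak{P}_i$ is a Lagrangian in the product of the two quiver varieties, and its two projections are merely proper maps of smooth varieties (already for $\fg=\mathfrak{sl}_2$ with $\fM^\la_{\mu}=T^*\mathbb{P}^1$, one projection is the closed embedding of the zero section, not a fibration); the maps $(\pi_1)_*,(\pi_2)_*$ are defined through Borel--Moore homology, not fiber integration. So the identification of the categorical trace with these Gysin maps cannot be read off fiber by fiber; that identification is exactly the content to be proved.

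The missing bridge is the one the paper supplies: realize $\fM^\la_\mu$ and $\mathfrak{P}_i$ as open subsets of $T^*\hat{X}^\la_\mu$ and of the conormal variety $N^*\hat{X}^\la_{\mu;\al_i}$; observe that under the Riemann--Hilbert correspondence the geometric biadjunction for the D-module functors becomes the standard adjunction of constructible sheaves for the proper smooth maps of the $\hat{X}$'s; and then use the vector-bundle projections $q\colon N^*\hat{X}^\la_{\mu;\al_i}\to\hat{X}^\la_{\mu;\al_i}$ and $q_i\colon T^*\hat{X}^\la_{\mu}\to\hat{X}^\la_{\mu}$ (along which pushforward of the constant sheaf returns the constant sheaf) to match those adjunction morphisms, by functoriality, with the sheaf maps defining the pushforwards on the conormal varieties; locality then permits restriction to the stable locus. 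None of this appears in your argument. Relatedly, the ``Euler-class corrections'' you anticipate are a red herring for this lemma: they belong to the earlier comparison of the Cautis--Lauda and geometric dualities, whereas the present statement, once set up correctly, follows from naturality of sheaf maps with no residual scalar to chase.
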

  \begin{proof}
    First, note that both maps are induced by sheaf maps from the
    pushforward  $(\pi_1)_* \C_{\mathfrak{P}_i }$ (resp. $(\pi_2)_* \C_{\mathfrak{P}_i }$) of the constant sheaf on $\mathfrak{P}_i$ to the
    constant sheaf $\C_{\fM^\la_{\mu+\al_i}}$
    (resp. $\C_{\fM^\la_{\mu}}$).

Since we have fixed $i$, we can choose an orientation for which $i$ is
a source.    In this case, $\fM^\la_\mu$ is a open subset of the stack
quotient $T^*\hat{X}^\la_\mu$, and   $\mathfrak{P}_i$ the
corresponding open subset of the conormal 
$N^*\hat{X}^\la_{\mu;\al_i}$.  The functors $\mathscr{E}_i$ and
$\mathscr{F}_i$ are induced by pushforward and pullback in the
category of D-modules along the maps
$\hat{X}^\la_{\mu}\leftarrow \hat{X}^\la_{\mu;\al_i}\to
\hat{X}^\la_{\mu+\al_i}$.   The geometric duality precisely matches
under 
Riemann-Hilbert correspondence with the usual
biadjunction for the corresponding functors on constructible sheaves.
That is, it corresponds to the natural sheaf maps  
\begin{equation}
(\pi_1)_* \C_{\hat{X}^\la_{\mu;\al_i}}\to
\C_{\hat{X}^\la_{\mu+\al_i}}[d_{\mu+\al_i}-d_{\mu}] \qquad (\pi_2)_*\C_{\hat{X}^\la_{\mu;\al_i}}\to
\C_{\hat{X}^\la_{\mu}}[d_{\mu}-d_{\mu+\al_i}]. \label{eq:2}
\end{equation}
On the other hand, Varagnolo's construction proceeds using similar
maps  \[(\pi_1)_* \C_{\mathfrak{P}_i}\to
\C_{\fM^\la_{\mu+\al_i}}[d_{\mu+\al_i}-d_{\mu}] \qquad (\pi_2)_*\C_{\mathfrak{P}_i}\to
\C_{\fM^\la_\mu}[d_{\mu}-d_{\mu+\al_i}], \] which are in turn the restriction to the stable
locus of sheaf maps
\begin{equation}
(\pi_1)_* \C_{N^*\hat{X}^\la_{\mu;\al_i}}\to
\C_{T^*\hat{X}^\la_{\mu+\al_i}}[d_{\mu+\al_i}-d_{\mu}] \qquad (\pi_2)_*\C_{N^*\hat{X}^\la_{\mu;\al_i}}\to
\C_{T^*\hat{X}^\la_{\mu}}[d_{\mu}-d_{\mu+\al_i}].\label{eq:1}
\end{equation}

While these varieties are different, we have vector bundle map 
\[q_1\colon  T^*\hat{X}^\la_{\mu+\al_i}\to
\hat{X}^\la_{\mu+\al_i}\qquad q_2\colon  T^*\hat{X}^\la_{\mu+\al_i}\to
\hat{X}^\la_{\mu+\al_i} \qquad q\colon N^*\hat{X}^\la_{\mu;\al_i}\to
\hat{X}^\la_{\mu;\al_i}\]
such that  \[(q_1)_*\C_{T^*\hat{X}^\la_{\mu+\al_i}}\cong
\C_{\hat{X}^\la_{\mu+\al_i}}\qquad (q_2)_*\C_{T^*\hat{X}^\la_{\mu}}\cong
\C_{\hat{X}^\la_{\mu}}\qquad q_*\C_{N^*\hat{X}^\la_{\mu;\al_i}}\cong
\C_{\hat{X}^\la_{\mu;\al_i}}\] and the pushforward map intertwines the
maps of \eqref{eq:2} and of \eqref{eq:1} by the usual functoriality.

 Thus, the map of sheaves induced by
$\hat{\mathscr{E}}_i$ and $\hat{\mathscr{F}}_i$  agrees the
pushforward and pullback of constructible sheaves on these larger varieties.  Since this is a
local property, it is unchanged by pulling back to the open subsets
$\fM^\la_\mu, \mathfrak{P}_i,$ and  $\fM^\la_{\mu+\al_i}$.
\end{proof}

Thus, immediately from the definition, we have that:
\begin{corollary}\label{cor:cohomology-match}
The isomorphism $H^*(\fM^\la_\mu)\to H\!H(\A_{\mu})$  intertwines
the action of the current algebra for any function $\beta$, and the action
induced by $\tU$ with the $\beta$-geometric duality.
\end{corollary}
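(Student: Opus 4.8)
The plan is to unwind both current-algebra actions through the two preceding lemmas and check that they produce identical operators, signs included. Recall that the $\tU$-induced action on the center is $x^\pm_{i,r}(z)=y_{\pm i}^r\star z$, and that the convolution $\star$ factors as the natural pairing $\End(\eE_i)\otimes Z(\mathcal{C}_\mu)\to\End(\eE_i)$ followed by the trace $\tau$. So I would compute the image of a class $m\in H^*(\fM^\la_\mu)\cong H\!H(\A_\mu)$ under the raising operator $x^+_{i,r}$ using this factorization, and compare it term-by-term with Varagnolo's formula $x^+_{i,r}(m)=(-1)^{\beta(\mu-\la,\al_i)}(\pi_1)_*(z^r\cup\pi_2^*m)$.

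First I would identify the three geometric inputs under the isomorphism $\End(\eE_i)\cong\Ext^\bullet(\mathscr{E}_i,\mathscr{E}_i)\cong H^*(\mathfrak{P}_i;\C)$ supplied by the first lemma. The dot endomorphism $y_i$ is the degree-two tautological class $z=c_1(V_i'/V_i)$, so $y_i^r$ becomes $z^r$; and the action of $m$ on $\End(\eE_i)$ is by the algebra map $H\!H(\A_\mu)\to\Ext^\bullet(\mathscr{E}_i,\mathscr{E}_i)$, which that same lemma identifies with the pullback $\pi_2^*$. Hence $y_i^r\otimes m$ corresponds to $z^r\cup\pi_2^*m\in H^*(\mathfrak{P}_i)$. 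Applying $\tau$, which the second lemma identifies with $(\pi_1)_*$ under the plain geometric duality, produces $(\pi_1)_*(z^r\cup\pi_2^*m)$; passing to the $\beta$-geometric duality multiplies $\tau$ by the recorded twist $\mu\mapsto(-1)^{\beta(\mu-\la,\al_i)}$, and the two formulas coincide.

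The lowering case runs in parallel: with $u=\eF_i$ the center acts through $\pi_1^*$ and the trace to $\fM^\la_\mu$ is the dual trace $\tau^\star$, identified by the second lemma with $(\pi_2)_*$, so that $x^-_{i,r}(m)=(-1)^{\beta(\al_i,\mu-\la)}(\pi_2)_*(z^r\cup\pi_1^*m)$, matching the transposed sign of Varagnolo's lowering operator. Since the $x^\pm_{i,r}$ generate $\fg[t]$, the Cartan generators $\xi_{i,r}$ are then automatically intertwined, and the corollary follows. The only genuinely delicate point is the sign bookkeeping: I would need to confirm that the duality twist by $(-1)^{\beta(\mu-\la,\al_i)}$ inserts exactly the prefactor in Varagnolo's formula, and that $\tau^\star$ carries the transposed sign $(-1)^{\beta(\al_i,\mu-\la)}$. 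Everything else is a direct substitution of the two lemmas into the respective definitions, which is why the statement follows immediately.
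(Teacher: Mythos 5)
Your proposal is correct and is essentially the paper's own argument: the paper offers no written proof beyond the phrase ``immediately from the definition,'' and your unwinding --- identifying $y_i^r$ with $z^r$, the $H\!H$-action with $\pi_2^*$ via the first lemma, and $\tau$ (resp.\ $\tau^\star$) with $(\pi_1)_*$ (resp.\ $(\pi_2)_*$) via the second lemma, with the $\beta$-twist supplying Varagnolo's sign --- is exactly the substitution the author intends. The sign bookkeeping you flag is real but is likewise left implicit in the paper.
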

This allows to complete the proof of the first part of our main theorem:
\begin{theorem}\label{thm:final}
  The induced map $\phi \colon H^*(\fM^\la_\mu)\to Z(R^\la_\mu)$ is injective.
\end{theorem}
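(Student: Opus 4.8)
The plan is to leverage the current-algebra equivariance assembled above and then reduce injectivity to a faithfulness statement on the compact core. First, combining Corollary~\ref{cor:cohomology-match} (which matches Varagnolo's action on $H^*(\fM^\la_\mu)$ with the $\tU$-action on $H\!H(\A_\mu)$ under the $\beta$-geometric duality) with Corollary~\ref{cor:centers-match} (which makes the comparison $H\!H(\A_\mu)\to Z(R^\la_\mu)$ equivariant for the action induced by any cyclic duality), and invoking Proposition~\ref{prop:double-dual} to guarantee that the $\beta$-geometric duality is indeed cyclic, I would conclude that $\phi$ is a morphism of graded $\fg[t]$-modules. Since $\phi$ is a unital algebra map it sends the fundamental class $1_\mu\in H^0(\fM^\la_\mu)$ to $1_\mu\in Z(R^\la_\mu)$; in particular, at the top weight it sends $1_\la\mapsto 1_\la\neq 0$. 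Consequently $\ker\phi$ is a graded $\fg[t]$-submodule of $\bigoplus_\mu H^*(\fM^\la_\mu)$ that does \emph{not} contain $1_\la$.

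Next I would use that $\phi$ factors through the subcategory $\mathcal{C}\cong R^\la_\mu\dgmod$: by construction it is the composite of the restriction $H\!H(\A_\mu)\to H\!H(\mathcal{C})$ of Hochschild classes to $\mathcal{C}$ with the evaluation $H\!H(\mathcal{C})\to Z(\Ext^\bullet(C_\mu,C_\mu))=Z(R^\la_\mu)$ of a natural endotransformation of the identity on the object $C_\mu$. The second map is injective because $C_\mu$ is a compact generator of $\mathcal{C}$ — it is precisely the object realizing the equivalence with $R^\la_\mu\dgmod$ — so any natural transformation of $\id_{\mathcal{C}}$ vanishing on $C_\mu$ vanishes on all objects built from it, hence identically. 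Thus injectivity of $\phi$ is reduced to injectivity of the restriction $H^*(\fM^\la_\mu)=H\!H(\A_\mu)\to H\!H(\mathcal{C})$.

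Finally I would identify $H\!H(\mathcal{C})$ with the cohomology of the compact core $\mathfrak{L}$ that supports the objects of $\mathcal{C}$, so that the restriction becomes the pullback $H^*(\fM^\la_\mu)\to H^*(\mathfrak{L})$ along the inclusion of the core. The contracting $\C^\times$-action on $\fM^\la_\mu$ scaling the fibres of the affinization $\fM^\la_\mu\to\fM^\la_{\mu,0}$ retracts the variety onto $\mathfrak{L}$, so this pullback is an isomorphism and in particular injective, which gives injectivity of $\phi$. In the module-theoretic language of the first paragraph this says exactly that $\bigoplus_\mu H^*(\fM^\la_\mu)$ is cogenerated by $1_\la$: its highest-weight line lies in every nonzero $\fg[t]$-submodule, so $\ker\phi$, which omits $1_\la$, must vanish.

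The main obstacle I anticipate is the middle identification: making precise that $H\!H(\mathcal{C})$ computes the cohomology of a formal neighbourhood of the core and that the comparison map really is the geometric pullback to $\mathfrak{L}$, rather than the retraction itself, which is standard for Nakajima varieties. I would stress that no step here invokes finite type, which is why injectivity holds for all simply-laced $\fg$; the stronger surjectivity in type ADE is precisely where the current-algebra equivariance must be combined with the algebraic Kirwan surjectivity of Proposition~\ref{prop:Kirwan}.
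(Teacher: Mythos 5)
Your first paragraph reproduces the paper's strategy faithfully: after matching the $\beta$-geometric duality with a $\beta'$-Cautis-Lauda duality, Corollaries \ref{cor:centers-match} and \ref{cor:cohomology-match} make $\phi$ a map of graded $\fg[t]$-modules sending $1_\la$ to $1_\la\neq 0$, so everything reduces to showing that every nonzero graded $\fg[t]$-submodule of $\bigoplus_\mu H^*(\fM^\la_\mu)$ contains $1_\la$. The gap lies in how you then justify that reduction. Your second step claims that $H\!H(\mathcal{C})\to Z(\Ext^\bullet(C_\mu,C_\mu))$ is injective ``because $C_\mu$ is a compact generator.'' That is false as a general principle: generation gives $H\!H(\mathcal{C})\cong H\!H(\Ext^\bullet(C_\mu,C_\mu))$, but the characteristic map from the Hochschild cohomology of a (dg-)algebra to its center kills all classes of positive Hochschild degree and is essentially never injective (already for $\mathcal{C}=D^b(A\mmod)$ with generator $A$, it kills $H\!H^{>0}(A)$). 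Your third step asserts $H\!H(\mathcal{C})\cong H^*(\mathfrak{L})$ with the comparison map being pullback to the core; this is unsubstantiated (you flag it yourself), and the Hochschild cohomology of the subcategory of core-supported modules is not the Betti cohomology of the core. Note also that if both of your claims held, $\phi$ would be injective by a purely local/topological argument with no current-algebra input, which should make you suspicious. Finally, the homotopy retraction of a single $\fM^\la_\mu$ onto its core is a statement about one weight space and cannot deliver the cogeneration of the \emph{sum over all $\mu$} as a $\fg[t]$-module, which is what your last sentence needs.

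The paper closes exactly this gap with one external input that your argument omits: by Naoi \cite[4.4]{Naoi}, following Nakajima \cite[13.3.1]{Na}, the Borel--Moore homology $\bigoplus_\mu H_*^{BM}(\fM^\la_\mu)$ is a cyclic $\fg[t]$-module generated by the fundamental class of $\fM^\la_\la$. Dualizing, $\bigoplus_\mu H^*(\fM^\la_\mu)$ is cocyclic, cogenerated by the identity in $H^*(\fM^\la_\la)\cong\C$, so every nonzero graded submodule contains $1_\la$; since $\ker\phi$ does not, it vanishes. If you replace your second and third paragraphs by this citation, your argument becomes the paper's proof.
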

\begin{proof}
Choose $\beta'$ so that the $\beta$-geometric and
$\beta'$-Cautis-Lauda dualities agree.  Combining Corollaries
\ref{cor:centers-match} and \ref{cor:cohomology-match}  show that $\phi$ intertwines the
$\beta$-action on $H^*(\fM^\la_\mu)$ with the $\beta'$-Cautis-Lauda
action on $Z(R^\la_\mu)$.  

As argued in \cite[4.4]{Naoi}, following \cite[13.3.1]{Na}, the
homology $\bigoplus_{\mu}H_*^{BM}(\fM^\la_\mu)$ is cyclic as a
$\fg[t]$-module and generated by
fundamental class of $\fM^\la_\la$.  Dually, this means that
$H^*(\fM^\la_\mu)$ is cocyclic, and cogenerated by the identity in
$H^*(\fM^\la_\la)\cong \C$.  Thus, any map which is compatible with
$\fg[t]$ either kills this vector or is injective.  The former is clearly
false, since $\phi$ in weight $\la$ is an algebra map between copies
of $\C$.  Thus $\phi$ is injective.  
\excise{
The image of $\phi$ obviously contains the identity in $Z(R^\la_\mu)$
for any $\mu$, so if Kirwan surjectivity holds, then $\phi$ must be
surjective, and is thus an isomorphism.}
\end{proof}

\subsection{Surjectivity}
\label{sec:surjectivity}

Let us consider how the maps of Theorem
\ref{thm:final} interacts with Kirwan surjectivity.  For each $\Bv$,
we have a Kirwan map $\kappa_g\colon H^*_{G_\Bv}(*)\to H^*(\fM^\la_{\mu})$; the image of
this map is the same as the subring generated by 
  the Chern classes of the tautological bundles
$V_i$.  Note that we have a natural map $\rho\colon H^*_{G_\Bv}(*)\to
Z(R_{\la-\mu})$ to the center of the KLR algebra; this induced by the
identification of $\operatorname{VV} \colon R_{\la-\mu}\to {}^{\delta}\mathbf{Z}_{\Bv}$ with a convolution algebra
${}^{\delta}\mathbf{Z}_{\Bv}$ in
\cite[3.6]{VV}.  Explicitly, this sends the $k$th Chern class $c_{i,k}$ of the
associated  bundle $V_i\times^{G_{\Bv}} EG_{\Bv}$ to the degree $k$
elementary symmetric polynomial on the strands with label $i$.  Note
that by \cite[2.9]{KLI}, this means that $\rho$ is an isomorphism.
\begin{proposition}
  The following diagram commutes:
\[\tikz[->,very thick]{
\matrix[row sep=10mm,column sep=15mm,ampersand replacement=\&]{
\node (a) {$H^*_{G_\Bv}(*)$}; \& \node (c) {$Z(R_{\la-\mu})$};\\
\node (b) {$H^*(\fM^\la_\mu)$}; \& \node (d) {$Z({{R}}^\la_\mu)$};\\
};
\draw (a) --node[midway,left]{$\kappa_g$}  (b);
\draw (a) -- node[midway,above]{$\rho$} (c);
\draw (b) --node[midway,below]{$\phi$}  (d);
\draw (c)--node[midway,right]{$\kappa_a$}  (d);
}\]
\end{proposition}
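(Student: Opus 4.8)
The plan is to verify commutativity on algebra generators and then identify both composites with the same explicit central element. Since all four maps are homomorphisms of graded-commutative algebras and $H^*_{G_\Bv}(*)=H^*(BG_\Bv)$ is a polynomial ring generated by the Chern classes $c_{i,k}$ of the tautological bundles, it suffices to check that $\phi(\kappa_g(c_{i,k}))=\kappa_a(\rho(c_{i,k}))$ in $Z(R^\la_\mu)$ for every vertex $i$ and every $k$.

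First I would compute the right-hand composite. By the convolution realization $\operatorname{VV}$ of \cite[3.6]{VV} together with \cite[2.9]{KLI}, the isomorphism $\rho$ carries $c_{i,k}$ to the $k$-th elementary symmetric polynomial $e_k$ in the dots on the strands labelled $i$, viewed as a central element of $R_{\la-\mu}$. The cyclotomic projection $R_{\la-\mu}\twoheadrightarrow R^\la_\mu$ sends the $i$-labelled dots to the corresponding dots in $R^\la_\mu$, so $\kappa_a(\rho(c_{i,k}))$ is exactly $e_k$ of these dots inside $Z(R^\la_\mu)$. Thus the problem reduces to showing that $\phi$ sends the Chern class $c_k(V_i)$ of the tautological bundle on $\fM^\la_\mu$ to the symmetric function $e_k$ of the $i$-dots.

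For the left-hand composite, recall that $\phi$ is the Hochschild-to-center map $H\!H(\A_\mu)\cong H^*(\fM^\la_\mu)\to Z(\Ext^\bullet(C_\mu,C_\mu))\cong Z(R^\la_\mu)$ of \cite[\S 5.4]{BLPWgco} and \cite[3.6]{Webcatq}. Under this map, the class $\kappa_g(c_k(V_i))$ acts on the core module $C_\mu$ by cup product, and $\phi(c_k(V_i))$ is precisely this central endomorphism. The geometric input I would use is that the tautological bundle $V_i$ on $\fM^\la_\mu$ is the reduction of the tautological bundle on the ambient space $\hat{X}^\la_\mu$, which is exactly the bundle whose Chern roots become the $i$-dots under the convolution realization of \cite{VV}. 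Since the equivalence $\Ext^\bullet(C_\mu,C_\mu)\cong R^\la_\mu$ of \cite[3.6]{Webcatq} is constructed compatibly with this convolution picture, the action of $c_k(V_i)$ on $C_\mu$ is identified with $e_k$ of the $i$-dots, matching the right-hand composite.

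The main obstacle is this last identification. One must track the tautological bundle carefully through the equivalence of \cite[3.6]{Webcatq}, and in particular confirm that the geometric Kirwan map (Hamiltonian reduction from $\hat{X}^\la_\mu$ down to $\fM^\la_\mu$) and the algebraic cyclotomic quotient $R_{\la-\mu}\twoheadrightarrow R^\la_\mu$ are two manifestations of the same restriction operation, so that the a priori different realizations of $H^*_{G_\Bv}(*)$ on the geometric and algebraic sides are matched by $\phi$. Once this compatibility is established, commutativity on the generators $c_{i,k}$, and hence on all of $H^*_{G_\Bv}(*)$, follows at once.
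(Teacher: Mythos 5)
Your strategy---reduce to the polynomial generators $c_{i,k}$ of $H^*_{G_\Bv}(*)$ and chase them around both sides of the square---is viable, but it is a genuinely different route from the paper's, and as written it stops exactly at the point where the real work lies. The paper does not compute on generators at all: it inserts a middle column into the diagram, namely $Z({}^{\delta}\mathbf{Z}_{\Bv})$ (the Varagnolo--Vasserot convolution algebra, realized as endomorphisms of an explicit sheaf $L_\Bv$ on $T^*X^\la_\mu$) over $Z(\End(C_\mu))$. The right-hand square then commutes by definition, because the map $R^\la_\mu\to\End(C_\mu)$ is \emph{defined} as the image of the VV isomorphism under the reduction functor; and the left-hand square commutes by naturality of the Betti-to-Hochschild comparison map with respect to the open restriction $\fM^\la_\mu\subset T^*X^\la_\mu$, since both vertical maps $\kappa_g$ and the middle one are induced by that same pullback. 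This makes the commutativity formal, with no need to identify the image of any particular Chern class.

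The step you flag as the ``main obstacle''---confirming that the Hamiltonian reduction from $\hat X^\la_\mu$ to $\fM^\la_\mu$ and the cyclotomic quotient $R_{\la-\mu}\surj R^\la_\mu$ are matched by $\phi$, so that $\phi(c_k(V_i))$ really is $e_k$ of the $i$-dots---is not a technicality to be deferred; it \emph{is} the proposition, just restated at the level of generators. Your proposal asserts it on the grounds that the equivalence $\Ext^\bullet(C_\mu,C_\mu)\cong R^\la_\mu$ is ``constructed compatibly with the convolution picture,'' but gives no argument. The paper's intermediate diagram and the naturality of the sheaf map $\C_{T^*X^\la_\mu}\to\mathcal{E}nd(\boldsymbol{\mu}L_\Bv)$ are precisely what supply that compatibility. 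To complete your version you would need to import essentially the same factorization, at which point the generator-by-generator check becomes redundant. So: correct in outline, but the proposal leaves the one nontrivial claim unproved.
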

\begin{proof}
  This commutation is essentially automatic from the definition.   We
  can fill in a middle term in this commutative diagram:
\[\tikz[->,very thick]{
\matrix[row sep=10mm,column sep=20mm,ampersand replacement=\&]{
\node (a) {$H^*_{G_\Bv}(*)$}; \& \node (c) {$Z({}^{\delta}\mathbf{Z}_{\Bv})$}; \&\node (e) {$Z(R_{\la-\mu})$};\\
\node (b) {$H^*(\fM^\la_\mu)$}; \& \node (d) {$Z(\End(C_\mu))$};\& \node (f) {$Z({{R}}^\la_\mu)$};\\
};
\draw (a) --node[midway,left]{$\kappa_g$}  (b);
\draw (a) -- node[midway,above]{$\rho'$} (c);
\draw (b) --node[midway,below]{$\phi'$}  (d);
\draw (e) -- node[midway,above]{$Z(\operatorname{VV})$}  (c);
\draw (f) -- (d);
\draw (c)--(d);
\draw (e)--node[midway,right]{$\kappa_a$} (f);
}\]
The map $R^\la_\mu\to \End(C_\mu)$ is precisely defined by taking the
image of $\operatorname{VV}$ under a reduction functor, so the right
hand square is commutative by definition.  The left hand square can
thought of as follows.  We can identify ${}^{\delta}\mathbf{Z}_{\Bv}$
with the endomorphisms of the sheaf $L_{\Bv}:=\bigoplus_{|\Bi|=\Bv}L\star\tilde{\mathscr{E}_{i_1}}\star \cdots \star
    \tilde{\mathscr{E}}_{i_n}$ on $T^*X^\la_\mu$ where $L$ is the
    pushforward of functions on the subspace $\oplus_{i}\Hom(V_i,W_i)\subset
    E^\la_\mu$; taking solution sheaf, we obtain the pullback of
    Varagnolo and Vasserot's sheaf ${}^{\delta\!}\mathcal{L}_{\Bi}$
    via the projection $E^\la_\mu\to E^0_{\mu-\la}$, so the
    endomorphism are the same.  

On the other hand, $C_\mu$ is the
    image of $L_\Bv$ under the Kirwan functor, with this functor
    inducing the middle vertical map.  In this context, we can think
    of the Kirwan functor as pullback from $T^*X^\la_\mu $ to
    $\fM^\la_\mu$ of a microlocal D-module.  Thus, we can think of the map
    $\rho'$ as the action induced by the identification $
    H^*(T^*X^\la_\mu)\cong H^*_{G_\Bv}(*)$, induced the natural map of sheaves
    $\C_{T^*X^\la_\mu}\to \mathcal{E}nd( \boldsymbol{\mu} L_{\Bv})$.   

Since both the left and
    middle vertical maps are induced by pullback $\fM^\la_\mu \subset
    T^*X^\la_\mu $, and the maps $\rho',\phi'$ by the map from Betti cohomology
    to Hochschild cohomology, the square commutes by naturality of
    this map.
\end{proof}
Thus, under $\phi$, the images of $\kappa_g$ and $\kappa_a$ coincide.
\begin{corollary}
  If algebraic Kirwan surjectivity (i.e. the surjectivity of
  $\kappa_a$) holds, then the map $\phi$ is an isomorphism
  and the map  $\kappa_g$ is surjective.  That is, the conventional Kirwan
  surjectivity theorem holds for $\fM^\la_\mu$.
\end{corollary}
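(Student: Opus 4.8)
The plan is to deduce this formally from the commutative square of the preceding proposition together with the injectivity of $\phi$ furnished by Theorem~\ref{thm:final}; there is essentially no geometry left to do. First I would pass the square $\phi\circ\kappa_g=\kappa_a\circ\rho$ to images. Since $\rho$ is an isomorphism by \cite[2.9]{KLI}, we have $\im(\kappa_a\circ\rho)=\im\kappa_a$, and therefore $\phi(\im\kappa_g)=\im\kappa_a$; this is exactly the assertion recorded just before the statement.

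Assuming algebraic Kirwan surjectivity, the right-hand side is all of $Z(R^\la_\mu)$, so $\phi(\im\kappa_g)=Z(R^\la_\mu)$. In particular $\phi$ is surjective. Combined with its injectivity from Theorem~\ref{thm:final}, this shows $\phi$ is an isomorphism, which proves the first claim.

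It then remains to deduce the surjectivity of $\kappa_g$ itself. Now that $\phi$ is known to be a bijection, the equality $\phi(\im\kappa_g)=Z(R^\la_\mu)=\phi\bigl(H^*(\fM^\la_\mu)\bigr)$ together with the injectivity of $\phi$ lets us cancel $\phi$ and conclude $\im\kappa_g=H^*(\fM^\la_\mu)$, i.e.\ $\kappa_g$ is surjective. This is precisely the conventional Kirwan surjectivity statement for $\fM^\la_\mu$.

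The only point requiring care is the direction of this last cancellation: surjectivity of the composite $\phi\circ\kappa_g$ by itself only yields surjectivity of the outer map $\phi$, so it is genuinely the \emph{injectivity} of $\phi$ (and not merely its surjectivity) that is needed to back out the surjectivity of $\kappa_g$. Since every ingredient---commutativity of the square, $\rho$ an isomorphism, and $\phi$ injective---is already in hand, there is no real obstacle, and the corollary reduces to this short diagram chase.
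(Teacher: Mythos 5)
Your proof is correct and is exactly the argument the paper intends: the corollary is left as an immediate consequence of the commutative square (with $\rho$ an isomorphism) and the injectivity of $\phi$ from Theorem~\ref{thm:final}, which is precisely the diagram chase you carry out. Your closing remark that it is the injectivity of $\phi$, not its surjectivity, that lets you back out the surjectivity of $\kappa_g$ is the one subtle point, and you have it right.
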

Combining with Proposition \ref{prop:Kirwan} completes the proof of
Theorem \ref{main}.
It feels slightly embarrassing to the author to write this
as a corollary, since it very easily seen directly from \cite{Naoi}
in the finite case, but for the sake of posterity, let us note:
\begin{corollary}\label{cor:kirwan}
  Kirwan surjectivity holds for quiver varieties of finite ADE.
\end{corollary}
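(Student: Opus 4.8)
The plan is to obtain the statement as a purely formal consequence of the two preceding results, with no further geometric input required. First I would invoke Proposition \ref{prop:Kirwan}, which asserts that \emph{algebraic} Kirwan surjectivity---the surjectivity of $\kappa_a\colon Z(R_{\la-\mu})\to Z(R^\la_\mu)$---holds for every weight space when $\fg$ is of finite ADE type. This is the only point at which the finiteness hypothesis is genuinely used, since its proof relies on the structure of the dual Weyl module and its unique simple quotient via \cite{Naoi}.

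Next I would feed this into the commutative square relating $\kappa_g$, $\kappa_a$, $\phi$, and $\rho$. The map $\rho\colon H^*_{G_\Bv}(*)\to Z(R_{\la-\mu})$ is an isomorphism by \cite[2.9]{KLI}, and $\phi\colon H^*(\fM^\la_\mu)\to Z(R^\la_\mu)$ is injective by Theorem \ref{thm:final}. Commutativity gives $\phi\circ\kappa_g=\kappa_a\circ\rho$. Since $\kappa_a$ is surjective and $\rho$ is an isomorphism, the composite $\kappa_a\circ\rho$ is surjective, and hence so is $\phi\circ\kappa_g$; as this is surjective onto $Z(R^\la_\mu)$, the injective map $\phi$ must in fact be an isomorphism, and therefore $\kappa_g=\phi^{-1}\circ(\phi\circ\kappa_g)$ is surjective as well. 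Thus $\im\kappa_g$ is all of $H^*(\fM^\la_\mu)$, which is exactly conventional Kirwan surjectivity. This implication is precisely what the corollary immediately preceding records, so I would simply cite it together with Proposition \ref{prop:Kirwan}.

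There is essentially no obstacle at this final stage: all of the substantive content has already been established upstream---the injectivity of $\phi$ (resting on the cocyclicity of $\bigoplus_\mu H^*(\fM^\la_\mu)$ as a $\fg[t]$-module and the compatibility of $\phi$ with the current algebra action furnished by Corollaries \ref{cor:centers-match} and \ref{cor:cohomology-match}), the isomorphism $\rho$, and the algebraic Kirwan surjectivity of Proposition \ref{prop:Kirwan}. Combining these yields conventional Kirwan surjectivity for $\fM^\la_\mu$ in finite ADE type at once. If I were to point to where the real work lies, it is not here but in the transfer of current-algebra structure through $\phi$; the present corollary is a one-line bookkeeping step that packages $\kappa_a$-surjectivity into $\kappa_g$-surjectivity.
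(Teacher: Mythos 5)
Your proposal is correct and follows exactly the paper's route: the corollary is obtained by combining Proposition \ref{prop:Kirwan} (algebraic Kirwan surjectivity in finite ADE type) with the immediately preceding corollary, whose content you have correctly unpacked via the commutative square $\phi\circ\kappa_g=\kappa_a\circ\rho$, the isomorphism $\rho$, and the injectivity of $\phi$ from Theorem \ref{thm:final}. Nothing further is needed.
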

Unfortunately, it's not clear that this algebraic
analog of Kirwan surjectivity is any easier to prove in more general
situations.

\subsection{Generalizations}
\label{sec:equivariant-case}

We can give a small generalization of this theorem: there is a
generalization of $\mathcal{C}_\mu$ to a category $\mathcal{O}$ over
the quiver variety $\fM^\la_\mu$ based on the choice of a Hamiltonian $\C^*$-action.  In \cite[5.23]{BLPWgco}, we
conjecture that for any semi-simple generator $O$ of this category,
the map $H^*(\fM^\la_\mu)\to Z(\Ext^\bullet(O,O))$ is an isomorphism.
We can now show an important special case of this conjecture:
\begin{proposition}
  If $\Gamma$ is a finite ADE quiver and $O$ a
  semi-simple generator of category $\cO$ for any $\C^*$-action, then
  the map $H^*(\fM^\la_\mu)\to Z(\Ext^\bullet(O,O))$ is an isomorphism
\end{proposition}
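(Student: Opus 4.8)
The plan is to rerun the proof of Theorem~\ref{thm:final} with the compact-support category $\mathcal{C}_\mu$ and its generator $C_\mu$ replaced throughout by $\cO$ and $O$. The first thing I would check is that the categorical $\fg$-action $\mathcal{G}_\la$ on $\A_\mu$-modules restricts to $\cO$: the Hecke correspondences used to build $\mathscr{E}_i,\mathscr{F}_i$ are $\C^*$-stable, so these functors preserve the support/weight conditions cutting out $\cO$, and $\cO$ inherits a compatible categorical action. Granting this, the argument of Corollary~\ref{cor:centers-match} goes through for $O$: for any cyclic duality the natural map
\[\phi_\cO\colon H^*(\fM^\la_\mu)\cong H\!H(\A_\mu)\longrightarrow Z(\Ext^\bullet(O,O))\]
is compatible with the current-algebra action that duality induces on the center. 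Since Corollary~\ref{cor:cohomology-match} concerns only the universal map $H^*(\fM^\la_\mu)\to H\!H(\A_\mu)$, it applies unchanged, and exactly as in Theorem~\ref{thm:final} a choice of $\beta'$ makes $\phi_\cO$ intertwine the $\beta$-geometric action of $\fg[t]$ on $H^*(\fM^\la_\mu)$ with the $\beta'$-Cautis--Lauda action on $Z(\Ext^\bullet(O,O))$.

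Injectivity then transfers word for word. The sum $\bigoplus_\mu H^*(\fM^\la_\mu)$ is cocyclic as a $\fg[t]$-module, cogenerated by the identity of $H^*(\fM^\la_\la)\cong\C$ (\cite[4.4]{Naoi}, following \cite[13.3.1]{Na}); and $\phi_\cO$ is nonzero in weight $\la$, where $\fM^\la_\la$ is a point and $\phi_\cO$ is the unital map from $\C$ to the nonzero center $Z(\Ext^\bullet(O,O))$. Hence $\phi_\cO$ is injective, and its image is a copy of the dual Weyl module containing every identity $1_\mu$.

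For surjectivity I would imitate Subsection~\ref{sec:surjectivity}. The center $Z(\Ext^\bullet(O,O))$ carries a Kirwan-type map $\rho_\cO\colon H^*_{G_\Bv}(*)\to Z(\Ext^\bullet(O,O))$ sending tautological Chern classes to their images, and the triangle $\phi_\cO\circ\kappa_g=\rho_\cO$ should commute by the same naturality of the Betti-to-Hochschild map: both $\cO$ and $\mathcal{C}_\mu$ arise by applying the reduction (Kirwan) functor to the same convolution construction on $T^*X^\la_\mu$, and $\rho_\cO,\phi_\cO$ are both induced by pullback along $\fM^\la_\mu\subset T^*X^\la_\mu$. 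Since $\kappa_g$ is surjective in finite ADE (Corollary~\ref{cor:kirwan}), this gives $\im\rho_\cO=\im\phi_\cO$, so $\phi_\cO$ is surjective precisely when the tautological Chern classes generate $Z(\Ext^\bullet(O,O))$.

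The main obstacle is this last generation statement, the analog of algebraic Kirwan surjectivity (Proposition~\ref{prop:Kirwan}) for $\cO$; by the $\fg[t]$-equivariance already in hand it is equivalent to showing that $\bigoplus_\mu Z(\Ext^\bullet(O,O))$ is generated over $\fg[t]$ by the identities $1_\mu$. The natural route is the argument of \cite{Naoi} used in Proposition~\ref{prop:Kirwan}: if the target is a dual Weyl module, its unique simple quotient $W_{\la_{min}}$ lies in degree $0$ and is hit nontrivially by the degree-$0$ identity, so $1_\mu$ cannot lie in the maximal proper submodule and must generate. To avoid the circularity of assuming the target is a dual Weyl module, I would instead deduce generation by comparison with the core case, transporting algebraic Kirwan surjectivity for $R^\la_\mu$ across the $\fg[t]$-equivariant map on centers induced by the functor relating $\cO$ and $\mathcal{C}_\mu$; establishing that this comparison sends $1_\mu$ to $1_\mu$ is the delicate point. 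Granting the generation property, $\rho_\cO$ and hence $\phi_\cO$ is surjective, and together with injectivity this shows $\phi_\cO$ is an isomorphism.
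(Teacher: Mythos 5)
Your argument stalls at exactly the point you flag: the surjectivity of $\phi_\cO$ reduces to showing that $\bigoplus_\mu Z(\Ext^\bullet(O,O))$ is generated over $\fg[t]$ by the identities $1_\mu$, and nothing in your proposal establishes this. The route via Naoi is, as you concede, circular (it presupposes that the target is a dual Weyl module), and the ``comparison with the core case'' you gesture at is not a minor technical point but the whole content of the statement: you would need a $\fg[t]$-equivariant map relating $Z(\Ext^\bullet(O,O))$ and $Z(R^\la_\mu)$ that carries identities to identities and is surjective, and producing such a map is not easier than the proposition itself. The injectivity half also carries unverified baggage: Corollary \ref{cor:centers-match} is proved for the core category $\mathcal{C}^\la_\mu$ and its generator $C_\mu$, so to transplant it you must check that $\mathscr{E}_i,\mathscr{F}_i$ preserve category $\cO$ for the given $\C^*$-action and that the bubble/trace construction of the $\fg[t]$-action makes sense on $Z(\Ext^\bullet(O,O))$; plausible, but not free.

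The paper's proof is entirely different and does not revisit the current-algebra machinery at all. By the main theorem of \cite{Webqui}, $\Ext^\bullet(O,O)$ is Morita equivalent in finite type to a tensor product algebra $T^\bla$, and the module categories in question are covers of $R^\la_\mu\mmod$ in the sense of \cite{Webmerged}: the projective modules over $\Ext^\bullet(O,O)$ embed fully faithfully into $R^\la_\mu\mmod$. This forces $Z(\Ext^\bullet(O,O))\cong Z(R^\la_\mu)$, and the proposition then follows immediately from the already-established isomorphism $H^*(\fM^\la_\mu)\cong Z(R^\la_\mu)$ of Theorem \ref{thm:final} combined with Proposition \ref{prop:Kirwan}. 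Note that this identification of centers is precisely the ingredient that would fill the gap in your argument --- but once you have it, the rest of your construction is redundant.
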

\begin{proof}
 By \cite[Th. \ref{O-main1}]{Webqui}, we have that $\Ext^\bullet(O,O)$ is Morita
 equivalent to a tensor product algebra $T^\bla$ in finite type.  In both cases,
 the modules over these algebras are covers of $R^\la_\mu\mmod$ by
 \cite[Th. \ref{m-properties}]{Webmerged}; that
 is, the projective modules over $\Ext^\bullet(O,O)$ embed as a
 subcategory of $R^\la_\mu\mmod$.  This implies that these
 algebras have the same center, completing the proof.
\end{proof}

  Theorem \ref{thm:final} can also be upgraded to a
  $K=G_{\Bw}$-equivariant version in finite type, and
  $K=G_{\Bw}\times \C^*$-equivariant with the $\C^*$ acting by weight 1 on the
  edges of the cycle in affine type A.  Let $\check{R}^\la_\mu$ be
  the deformed KLR algebra defined in
  \cite[\S\ref{m-sec:univ-quant}]{Webmerged}; if $\Gamma$ has affine
  type A, then let $\check{\mathbb{R}}^\la_\mu$ be the deformed KLR
  algebra over $\C[h]$ attached to the polynomials \[Q_{ij}^h(u,v)=
(u-v+h)^{\#\{ j\to i\}}
(v-u+h) ^{\#\{ i\to j\}}. \]
  \begin{proposition}
   There is an injective algebra map
   \[H^*_{G_{\mathbf{w}}}(\fM^\la_\mu)\to Z(\check{R}^\la_\mu)\cong  \Ext^\bullet_{G_{\mathbf{w}}}(O,O)\] which
   is compatible with the current algebra actions.  This
   map is an isomorphism for finite type ADE.  In affine
   type A, we can deform further to an isomorphism
   \[H^*_{G_{\mathbf{w}}\times \C^*}(\fM^\la_\mu)\to
   Z(\check{\mathbb{R}}^\la_\mu) \cong
   \Ext^\bullet_{G_{\mathbf{w}}\times \C^*}(O,O).\]
  \end{proposition}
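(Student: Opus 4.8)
The plan is to re-run the entire argument of Theorem \ref{thm:final} in the $K$-equivariant setting (where $K=G_{\Bw}$, or $K=G_{\Bw}\times\C^*$ in affine type A) and then to deduce the two assertions from their already-established non-equivariant counterparts by a flatness argument over the base ring $S=H^*_K(\ast)$. First I would assemble the equivariant avatars of every object in Sections \ref{sec:quant-geom-quiv}--\ref{sec:hochsch-cohom}. The quiver varieties $\fM^\la_\mu$ carry the residual action of the framing group $G_{\Bw}$ (and, in affine type A, the extra $\C^*$ scaling the arrows of the cycle), and the Hecke correspondences $\mathfrak{P}_i$, the quantizations $\A_\mu$, and the sheaves $\mathscr{E}_i,\mathscr{F}_i$ all admit $K$-equivariant lifts. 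Equivariant Hochschild cohomology then computes $H\!H_K(\A_\mu)\cong H^*_K(\fM^\la_\mu)$, while the $K$-equivariant version of \cite[3.6]{Webcatq} identifies $\Ext^\bullet_K(C_\mu,C_\mu)$ with the deformed cyclotomic KLR algebra $\check R^\la_\mu$. The pullback map on Hochschild cohomology, built exactly as for $\phi$, produces an $S$-linear algebra map $\phi_K\colon H^*_K(\fM^\la_\mu)\to Z(\check R^\la_\mu)$.

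For compatibility with the current algebra, I would observe that the comparison of the Cautis--Lauda and geometric dualities carried out in the propositions preceding Corollary \ref{cor:cohomology-match} is entirely local and respects the $K$-action, so it goes through verbatim. Since Varagnolo's construction \cite{Varyang} is already equivariant, Corollaries \ref{cor:centers-match} and \ref{cor:cohomology-match} upgrade directly to show that $\phi_K$ intertwines the $S$-linear current algebra action on $H^*_K(\fM^\la_\mu)$ with that on $Z(\check R^\la_\mu)$.

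The structural input is flatness. Nakajima quiver varieties are equivariantly formal, so $H^*_K(\fM^\la_\mu)$ is a free graded $S$-module with $H^*_K(\fM^\la_\mu)\otimes_S\C\cong H^*(\fM^\la_\mu)$; likewise $Z(\check R^\la_\mu)$ is free over $S$ and specializes to $Z(R^\la_\mu)$, with $\phi_K\otimes_S\C=\phi$. Injectivity now follows from Theorem \ref{thm:final}: since $\phi$ is injective and the source is free, the matrix of $\phi_K$ has full column rank on the special fibre, hence, by lower semicontinuity of rank, full column rank at the generic point; thus $\phi_K\otimes_S\operatorname{Frac}(S)$ is injective, and as the free module $H^*_K(\fM^\la_\mu)$ embeds in its localization, $\phi_K$ is injective. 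For finite ADE, where $\phi$ is an isomorphism, source and target are free of equal rank and $\phi_K$ is surjective modulo the augmentation ideal, so graded Nakayama gives surjectivity, and a surjection between free $S$-modules of equal rank is an isomorphism.

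Finally, the affine type A refinement is where the genuinely new content lies. Here the extra $\C^*$ acting with weight $1$ on the arrows of the cycle contributes a parameter $h=c_1^{\C^*}$, and the point is that this parameter enters the relations of $\Ext^\bullet_K(C_\mu,C_\mu)$ precisely as the shift in $Q_{ij}^h(u,v)$. This is a local computation: in the geometric derivation of the quadratic relation \eqref{black-bigon}, the Chern roots $y_1,y_2$ on the two strands differ by the $\C^*$-weight of the relevant arrow, so the Euler class governing the bigon is $Q^h_{ij}$ rather than $Q_{ij}$, identifying the equivariant Ext-algebra with $\check{\mathbb{R}}^\la_\mu$. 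The isomorphism then follows as in the third paragraph once the deformed analogue of algebraic Kirwan surjectivity is in hand. I expect this last point to be the main obstacle: unlike the finite ADE case it cannot be imported from the non-equivariant Theorem \ref{thm:final}, and instead requires the cyclic-quiver module-theoretic input of \cite{SVVcenter}, namely that $\bigoplus_\mu Z(\check{\mathbb{R}}^\la_\mu)$ remains cocyclic over the deformed current algebra.
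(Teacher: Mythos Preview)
Your approach is sound but takes a different route from the paper.  The paper's proof is almost entirely devoted to the identification $\Ext^\bullet_{G_{\Bw}}(C_\mu,C_\mu)\cong \check R^\la_\mu$, which you black-box as ``the $K$-equivariant version of \cite[3.6]{Webcatq}.''  The paper instead invokes the universality statement \cite[\S\ref{m-sec:univ-quant}]{Webmerged}, which says the equivariant Ext-algebra is {\it a priori} some specialization of the deformed KLR algebra, and then pins down which specialization by computing the single case $\mu=\la-\al_i$: there the quiver variety is $T^*\mathbb P(W_i)$, the core module is the zero section, and its equivariant Ext-algebra is $H^*_{G_{\Bw}}(\mathbb P(W_i))\cong \C[y]/\prod_k(y-x_{i,k})$, so the cyclotomic polynomial is exactly the characteristic polynomial of the tautological bundle and the deformation parameters match the Chern classes of $W_i$.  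Once this is established, the paper simply says injectivity and the ADE isomorphism follow as before; your flatness-plus-Nakayama argument is a clean alternative way to transport these from the non-equivariant theorem, though you should justify that $Z(\check R^\la_\mu)$ is free over $S$ and specializes to $Z(R^\la_\mu)$ (taking centers does not commute with base change in general; in finite type this follows from the dual-Weyl-module identification on both sides).  Your flagging of the affine type~A surjectivity as the real obstacle is honest---the paper's proof also disposes of it in one sentence (``otherwise, the proof is the same'') without saying where surjectivity comes from, so neither argument is fully explicit on that point.
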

  \begin{proof}
    First, we need to that these maps exist.  We have a natural map
    from $H^*_{K}(\fM^\la_\mu)$ to the Ext algebra of $C_\mu$ in the
  $K$-equivariant derived category.  This equivariant Ext-algebra
  must be a base extension of $\check{R}^\la_\mu$ by
  \cite[\ref{m-universal}]{Webmerged};  this base extension can
  calculated using the minimal polynomial of $y$ acting on
  $C_{\la-\al_i}$;  since the quiver variety in thise case in
  $T^*\mathbb{P}(W_i)$, we have that the only core module can be
  identified with the zero-section, and its equivariant Ext-algebra is
  just \[H^*_{G_{\Bw}}(\mathbb{P}(W_i))\cong \C[y]/(y^{w_i}-c_{i,1}
  y^{w_i-1}+\cdots +(-1)^{w_i}c_{i,w_i})\] where $c_{i,j}$ is the $j$th
  Chern class of the tautological bundle $W_i$ on $*/G_{\Bw}$.  Thus,
  we obtain the universal quantization itself, with the deformation
  parameters precisely matching the Chern classes on $*/G_{\Bw}$.  

In order to extend to the affine deformation, we simply note that the
KLR algebra is deformed to that for $Q_{ij}^h(u,v)$ by the addition of
$\C^*$-action; otherwise, the proof is the same.
  \end{proof}

 \bibliography{../gen}
\bibliographystyle{amsalpha}
\end{document}